\newtheorem{theorem}{Theorem}[section]
\theoremstyle{definition}
\newtheorem{definition}[theorem]{Definition}
\theoremstyle{remark}
\numberwithin{equation}{section}
\newcommand{\doesnotdivide}{\nmid}
\newcommand{\partition}[1]{\ensuremath{\left\langle{#1}\right\rangle}}
\newcommand{\num}{\nu}
\newcommand{\GEQ}{\succcurlyeq}
\newcommand{\genqbinom}[3]{\ensuremath{\genfrac{[}{]}{0pt}{}{#1}{#2}_{#3}}}
\begin{document}

\title[An inequality of products of two $q$-Pochhammer symbols]{A partition inequality involving products of two $q$-Pochhammer symbols}

%    Information for first author
\author{Alexander Berkovich}
%    Address of record for the research reported here
\address{Department of Mathematics, University of Florida, Gainesville, FL 32611-8105}
%    Current address
\email{alexb@ufl.edu}
%    \thanks will become a 1st page footnote.
%\thanks{The first author was supported in part by NSF Grant \#000000.}the authors of this paper

%    Information for second author
\author{Keith Grizzell}
\address{Department of Mathematics, University of Florida, Gainesville, FL 32611-8105}
\email{grizzell@ufl.edu}
%\thanks{Support information for the second author.}

%    General info
\subjclass[2010]{Primary 11P83; Secondary 11P81, 11P82, 11P84, 05A17, 05A19, 05A20}
\date{\today.}

\dedicatory{This manuscript is dedicated to the memory of Srinivasa Ramanujan.}

\keywords{$q$-series, generating functions, partition inequalities, injections, lecture hall partitions, races among products}

\begin{abstract}
We use an injection method to prove a new class of partition inequalities involving certain   $q$-products with two to four finitization parameters. Our new theorems are a substantial  generalization of work by Andrews and of previous work by Berkovich and Grizzell. We also briefly discuss how our products might relate to lecture hall partitions.
\end{abstract}

\maketitle

\section{Introduction} \label{sec:introduction}

The celebrated Rogers-Ramanujan identities \cite{RR} are given analytically as follows:
\begin{equation}
\sum^{\infty}_{n=0}\frac{q^{n^2}}{(q;q)_n} = \frac{1}{(q,q^4;q^5)_{\infty}} \label{RR1}
\end{equation}
and
\begin{equation}
\sum^{\infty}_{n=0}\frac{q^{n^2+n}}{(q;q)_n} = \frac{1}{(q^2,q^3;q^5)_{\infty}}. \label{RR2}
\end{equation}
Here we are using the following standard notations:
\begin{align*}
(a;q)_L &= \begin{cases}
              1 & \text{if } L=0,\\
              \prod_{j=0}^{L-1}(1-aq^j) &\text{if } L>0,
           \end{cases}\\
(a_1,a_2,\dots,a_n;q)_L &= (a_1;q)_L (a_2;q)_L \cdots (a_n;q)_L,\\
(a;q)_\infty &= \lim_{L\to\infty} (a;q)_L.
\end{align*}
Subtracting \eqref{RR2} from \eqref{RR1} we have
\begin{equation}
\sum^{\infty}_{n=1}\frac{q^{n^2}}{(q;q)_{n-1}}=
\frac{1}{(q,q^4;q^5)_{\infty}}- 
\frac{1}{(q^2,q^3;q^5)_{\infty}}, 
\label{RRdiff}
\end{equation}
from which it is obvious that the coefficients in the $q$-series expansion of the difference of the two products in \eqref{RRdiff} are all non-negative. 
In other words, for all $n>0$ we have
\begin{equation}
p_{1}(n) \geq p_{2}(n), \label{RRdiffThm}
\end{equation}
where $p_{r}(n)$ denotes the number of partitions of $n$ into parts congruent to $\pm r \pmod{5}$.

At the 1987 A.M.S.\ Institute on Theta Functions, Leon Ehrenpreis asked if one can prove \eqref{RRdiffThm} without resorting to the Rogers-Ramanujan identities.
In 1999, Kevin Kadell \cite{KK} provided an affirmative answer to this question by constructing an injection of partitions counted by $p_{2}(n)$ into partitions counted by $p_{1}(n)$.
In 2005, Alexander Berkovich and Frank Garvan \cite{BG} constructed an injective proof for an infinite family of partition function inequalities related to finite products, thus giving us the following theorem.
\begin{theorem}
\label{BGThm}
Suppose $L>0$, and $1<r<m-1$. Then the
coefficients in the $q$-series expansion of the difference of the two finite products
\begin{equation*}
\frac{1}{(q,q^{m-1};q^m)_L}-
\frac{1}{(q^r,q^{m-r};q^m)_L} 
\end{equation*}
are all non-negative if and only if $r \doesnotdivide (m-r)$ and $(m-r) \doesnotdivide r$.
\end{theorem}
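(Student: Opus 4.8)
The plan is to recast Theorem~\ref{BGThm} as a statement about partition counts and then handle the two directions separately. For $0<s<m$ let $p_s^{(L)}(n)$ be the coefficient of $q^n$ in $1/(q^s,q^{m-s};q^m)_L$, so that $p_s^{(L)}(n)$ counts the partitions of $n$ all of whose parts lie in $\{s,m+s,\dots,(L-1)m+s\}\cup\{m-s,2m-s,\dots,Lm-s\}$ (the common parts being counted twice when $s=m-s$); the theorem asserts $p_1^{(L)}(n)\ge p_r^{(L)}(n)$ for all $n$ if and only if $r\doesnotdivide(m-r)$ and $(m-r)\doesnotdivide r$. Replacing $r$ by $m-r$ if needed I may assume $1<r\le m-r$; then $(m-r)\divides r$ forces $r=m-r$, which already yields $r\divides(m-r)$, so after this normalization the negated hypothesis becomes simply ``$r\divides(m-r)$''.

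For the ``only if'' direction, suppose $r\divides(m-r)$. If $m=2r$ I compare coefficients of $q^r$: on the right-hand product this is $2$ (two ``coloured'' copies of the unique smallest admissible part $r$, from $1/(1-q^r)^2$), while on the left-hand product the only admissible part $\le r$ is $1$, because $m-1=2r-1>r$, so it is $1$. If $m>2r$, write $m-r=kr$ with $k\ge2$ and compare coefficients of $q^{m-r}$: on the right the admissible parts of size $\le m-r$ are exactly $r$ and $m-r$, giving the two distinct partitions $(m-r)$ and the partition with $k$ parts all equal to $r$, so the coefficient is $2$; on the left the only admissible part $\le m-r$ is $1$ (both $m-1$ and $m+1$ exceed $m-r$), so the coefficient is $1$. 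In both cases the difference has coefficient $-1$ in that degree, so non-negativity fails.

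For the ``if'' direction, assuming $r\doesnotdivide(m-r)$, I would construct a weight-preserving injection $\Phi$ from partitions counted by $p_r^{(L)}$ into partitions counted by $p_1^{(L)}$. Write each part of an input partition $\pi$ uniquely as $jm+r$ (``type $+$'', $0\le j\le L-1$) or $jm-r=(j-1)m+(m-r)$ (``type $-$'', $1\le j\le L$). The idea is to process $\pi$ level by level from the top: a type-$+$ part $jm+r$ contributes a target part $jm+1$ plus $r-1$ extra units, and a type-$-$ part $jm-r$ contributes a target part $(j-1)m+1$ plus $m-r-1$ extra units, where the ``extra units'' are realized as small admissible parts $\equiv\pm1\pmod m$ that encode the type and the level of the part they came from. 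Weight is preserved by construction; the work is to show $\Phi$ can be inverted --- one reads the upper-level parts of $\pi$ off the large target parts and then, at the bottom, recovers the multiplicities of the two smallest input parts $r$ and $m-r$.

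The main obstacle is exactly this bottom-level bookkeeping, and it is where the hypothesis is essential. A naive rule like ``turn each small part into that many $1$'s'' fails, because $a$ copies of $r$ together with $b$ copies of $m-r$ can have the same total as $a'$ copies of $r$ and $b'$ copies of $m-r$ with $(a,b)\ne(a',b')$; the pair $(a,b)$ must instead be recorded using a genuine mixture of parts $\equiv1$ and parts $\equiv-1\pmod m$, a kind of mixed-radix expansion, and this is unambiguous precisely because $r\doesnotdivide(m-r)$ --- when $r\divides(m-r)$ the relation ``$(m-r)/r$ copies of $r$ equals one copy of $m-r$'' collapses it, the same obstruction that produced the negative coefficient above. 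Making this bottom encoding nest correctly with the level-by-level images of all larger parts, uniformly in $L$, so that decoding can peel off levels from the top without ambiguity, is the crux. (As a check, when $L=1$ the difference has the closed form
\[
\frac{1}{(1-q)(1-q^{m-1})}-\frac{1}{(1-q^r)(1-q^{m-r})}=\frac{q\,(1-q^{r-1})(1-q^{m-r-1})}{(1-q)(1-q^{m-1})(1-q^r)(1-q^{m-r})},
\]
which one can check has non-negative coefficients if and only if $r\doesnotdivide(m-r)$; but a naive induction on $L$ built from this identity breaks down, since the single-level difference $1/\bigl((1-q^{(L-1)m+1})(1-q^{Lm-1})\bigr)-1/\bigl((1-q^{(L-1)m+r})(1-q^{Lm-r})\bigr)$ is not non-negative, so an honest injection is really needed.)
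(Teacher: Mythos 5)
This statement is quoted by the paper from Berkovich--Garvan \cite{BG}; the manuscript itself supplies no proof of it, so your attempt has to stand on its own. Your ``only if'' direction is correct and complete: the symmetry $r\leftrightarrow m-r$ legitimately reduces the negated hypothesis to $r\divides(m-r)$ with $r\le m-r$, and your two coefficient computations (at $q^r$ when $m=2r$, at $q^{m-r}$ when $m-r=kr$, $k\ge2$) are accurate and use only $L\ge1$, so they exhibit a genuinely negative coefficient in every excluded case.

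The ``if'' direction, however, has a genuine gap, and it is exactly the part that constitutes the substance of the theorem. You describe the intended shape of an injection (strip each part $jm\pm r$ down to a part $\equiv 1\pmod m$ at the appropriate level plus ``extra units'') and then correctly identify that the whole difficulty is in encoding the multiplicities $(a,b)$ of the two smallest parts $r$ and $m-r$ so that the map is invertible --- but you do not give the encoding. Saying it should be ``a kind of mixed-radix expansion'' that is ``unambiguous precisely because $r\doesnotdivide(m-r)$'' names the obstruction without overcoming it: the only admissible target parts below $\min(r,m-r)$ are $1$'s, so any rule must route information through parts $\equiv\pm1\pmod m$ of size at least $m-1$, and one must verify that these do not collide with the images of the higher-level parts $jm\pm r$, uniformly in $L$. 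You acknowledge this (``is the crux'') rather than resolve it, and your own parenthetical remark shows that the naive level-by-level induction fails, so no shortcut is available. As it stands the proposal proves one implication and only gestures at the other; the construction and verification of the injection (or some substitute argument) would still need to be carried out in full.
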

\noindent
We note that \eqref{RRdiffThm} is an immediate corollary of this theorem with $m=5$, $r=2$ and $L\rightarrow\infty$.

In 2012, drawing inspiration from George Andrews, Alexander Berkovich and Keith Grizzell proved the following theorem in \cite{BGr}. (Andrews had used his anti-telescoping technique to prove the $y=3$ case of the following theorem in \cite{An1}.)
\begin{theorem}
\label{BerkGrizThm}
For any $L>0$ and any odd $y>1$, the $q$-series expansion of
\begin{equation}
\frac{1}{(q,q^{y+2},q^{2y};q^{2y+2})_L} - \frac{1}{(q^2,q^{y},q^{2y+1} ;q^{2y+2})_L} = \sum_{n=1}^\infty a(L,y,n) q^{n} \label{BerkGrizEqn}
\end{equation}
has non-negative coefficients.
\end{theorem}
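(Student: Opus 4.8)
The plan is to prove the theorem by an explicit injection of partitions. First I would translate the two sides into partition counts: writing $N := 2y+2$, the coefficient of $q^n$ in $1/(q,q^{y+2},q^{2y};q^N)_L$ is the number of partitions of $n$ whose parts are all $\equiv 1$, $y+2$, or $2y \pmod N$ and which obey the finitization bound that each part equals $r + jN$ for its residue $r$ and some $j$ with $0 \le j \le L-1$; call $j$ the \emph{level} of that part. Likewise, the coefficient of $q^n$ in $1/(q^2,q^y,q^{2y+1};q^N)_L$ counts such partitions with parts $\equiv 2$, $y$, or $2y+1 \pmod N$. Thus it suffices to exhibit, for each $n$, a one-to-one map $\Phi$ from the second family into the first preserving $n$. (A convenient preliminary reduction is the identity $1/(q,q^{y+2};q^N)_L = 1/(q;q^{y+1})_{2L}$, which merges two of the three Pochhammer factors on each side into a single one modulo $y+1$, the companion being $1/(q^y,q^{2y+1};q^N)_L = 1/(q^y;q^{y+1})_{2L}$; but I will describe $\Phi$ directly on the residues mod $N$.)

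The moves driving $\Phi$ come from the arithmetic of these residues. The main one rests on $1+(y+2)+2y = 2+y+(2y+1)\ (=3(y+1))$: given a triple consisting of a level-$a$ part $\equiv 2$, a level-$b$ part $\equiv y$, and a level-$c$ part $\equiv 2y+1$, replace it by the level-$a$ part $1+aN$, the level-$b$ part $(y+2)+bN$, and the level-$c$ part $2y+cN$; this preserves the total, leaves all levels unchanged (hence legal), and lands in the target residues. Leftover parts are handled by auxiliary moves: from $2=1+1$, a surplus level-$a$ part $\equiv 2$ becomes the parts $1$ and $1+aN$; from $2y+1 = 1+2y$, a surplus level-$c$ part $\equiv 2y+1$ becomes $1+cN$ and $2y$; from $(y+2)+y = N$ one has $y+bN = \bigl((y+2)+(b-1)N\bigr)+2y$ for $b\ge 1$, while the part $y$ (level $0$) is rewritten as $y$ copies of the part $1$; and a leftover level-$a$ part $\equiv 2$ together with a leftover level-$b$ part $\equiv y$ combine into the part $(y+2)+(a+b)N$ when those are what remains. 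Finally $N = 1+1+2y$ lets one replace any constructed part whose level would exceed $L-1$ by a level-$(L-1)$ part together with copies of the parts $1$ and $2y$, so the finitization bound is never violated. The map $\Phi$ then matches the parts of the given partition across the three source residue classes into as many balanced triples as possible and applies the triple move; after this maximal matching the unmatched parts lie in exactly two of the three source classes, a short list of configurations dispatched by the auxiliary moves.

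The step I expect to be the main obstacle is injectivity: producing a deterministic inverse of $\Phi$. Reading a partition in the target family, one must first recognize and strip off the ``debris'' the auxiliary moves introduce — the extra copies of the parts $1$ and $2y$, and the occasional block of $y$ ones — recover the leftover parts, and only then re-pair the remaining level-matched parts $\equiv 1$, $\equiv y+2$, $\equiv 2y$ back into balanced triples. For this reading to be forced and unambiguous one has to pin down, very carefully, the order in which triples are matched, the precise level assignments inside each auxiliary move, and the order in which the moves are applied, so that debris always sits at the extreme levels ($0$ or $L-1$), occurs in prescribed multiplicities, and can never be mistaken for genuine triple output. Carrying this out amounts to a case analysis — over which source classes carry the leftovers, whether the relevant levels are $0$ or maximal, and whether overflow reduction is invoked — that is where essentially all of the work lies. (For context, Andrews settled $y=3$ via anti-telescoping, but the straightforward one-step telescoping $\tfrac{1}{(q,q^{y+2},q^{2y};q^N)_L} - \tfrac{1}{(q^2,q^y,q^{2y+1};q^N)_L} = \sum_{k=1}^{L} \tfrac{Q_L}{Q_k}\bigl(P_k - P_{k-1}\tfrac{Q_k}{Q_{k-1}}\bigr)$, where $P_k$ and $Q_k$ denote the respective products with $L$ replaced by $k$, already has summands with negative $q$-coefficients once $L \ge 2$, so obtaining non-negativity genuinely calls for the combinatorial injection.)
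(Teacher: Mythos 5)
Your overall strategy---a weight-preserving injection from partitions into parts $\equiv 2,\,y,\,2y{+}1 \pmod{2y+2}$ to partitions into parts $\equiv 1,\,y{+}2,\,2y$---is the right kind of attack (the paper obtains this theorem as the specialization $z=1$, $n=2$, $m=y+1$, $K=2L$ of Theorem~\ref{MainThm}, which is proved by such an injection). But there is a genuine gap, and it is not only the deferred ``case analysis'' you flag at the end. The move set you list is already non-injective before any questions of ordering or disambiguation arise. Take $y=3$ (so $N=8$), $L=1$, and weight $12$: the partition $\langle 2^6\rangle$ has no parts in the classes $y$ or $2y{+}1$, so no triples form and each surplus $2$ becomes $1+1$, giving $\langle 1^{12}\rangle$; the partition $\langle 3^4\rangle$ has no parts in the classes $2$ or $2y{+}1$ and all its parts sit at level $0$, so your rule sends each part $y$ to $y$ copies of $1$, giving $\langle 1^{12}\rangle$ as well. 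Two distinct domain partitions share one image, and no reordering of moves can fix this---one of the images must change. What is missing is the move that pairs \emph{two} parts from the class $y \pmod{y+1}$ into a single part $\equiv 2y \pmod{2y+2}$ (so that $\langle 3^4\rangle \mapsto \langle 6^2\rangle$), reserving the ``$y$ copies of $1$'' rule for the single leftover part when the multiplicity is odd. That division-with-remainder by $2$ of the multiplicities in the $y$-class is precisely the engine of the paper's injection (the quantities $Q$ and $R$ in Section~\ref{sec:injection}); without it your scheme cannot close.

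It is also worth noting that the paper's injection is organized quite differently: it never matches triples across the three residue classes. After the reduction you yourself mention (merging $q,q^{y+2}$ into the single class $1 \bmod (y+1)$ and $q^y,q^{2y+1}$ into $y \bmod (y+1)$), it rewrites each multiplicity $\num((yz)_i,\pi_2)=2Q+R$ and each class-$2$ part independently, dumps all debris onto the single smallest part $1$, and gets injectivity from the uniqueness of the representation $\mu(\pi_1)=a\cdot n+b\cdot y$ with $a\ge 0$, $0\le b<n$, using $\gcd(n,y)=1$. Even with the pairing move added, your triple-matching plan still owes the reader a deterministic inverse; since you explicitly leave that as ``where essentially all of the work lies,'' the proposal is an outline of an approach rather than a proof.
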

We note that the products on the left of \eqref{BerkGrizEqn} can be interpreted as 
\begin{align}
\frac{1}{(q,q^{y+2},q^{2y} ;q^{2y+2})_L} &= 1 + \sum_{n=1}^\infty P_{1}'(L,y,n) q^{n}\\
\intertext{and}
\frac{1}{(q^2,q^{y},q^{2y+1} ;q^{2y+2})_L} &= 1 + \sum_{n=1}^\infty P_{2}'(L,y,n) q^{n},
\end{align}
where  $P_1'(L,y,n)$ denotes the number of partitions of $n$ into parts $\equiv 1,y+2,2y \pmod{2(y+1)}$ with the largest part not exceeding $2(y+1)L-2$ and $P_2'(L,y,n)$ denotes the number of partitions of $n$ into parts $\equiv 2,y,2y+1 \pmod{2(y+1)}$ with the largest part not exceeding $2(y+1)L-1$.

These problems all belong to a broad class of positivity problems in $q$-series and partitions which often are very deceptive because they are so easy to state but so painfully hard to solve. For example, consider the famous problem from Peter Borwein:
\begin{quote}
Let $B_e(L,n)$ (resp.\ $B_o(L,n)$)  denote the number of partitions of $n$ into an even (resp.\ odd) number  of distinct nonmultiples of $3$ with each part less than $3L$. Prove that for all positive integers $L$ and $n$,
$B_e(L,n)-B_o(L,n)$  is nonnegative if $n$ is a multiple of $3$ and nonpositive otherwise.
\end{quote}
As of the date of this manuscript, this conjecture still remains unproved despite the efforts of many excellent mathematicians. (For further background on this conjecture we refer the reader to \cite{An4}, \cite{BW}, \cite{BR}, \cite{W1}, and \cite{W2}.)

There is a useful notation that can be used to succinctly convey the fact that coefficients of a difference of two $q$-series are nonnegative.
\begin{definition}%TODO: Source of this definition?
  Let $A(q):=\sum_{x\geq0}a_xq^x$ and $B(q):=\sum_{x\geq0}b_xq^x$ be two $q$-series. Then $A(q) \GEQ B(q)$ if and only if $a_x \geq b_x$ for all $x\geq0$.
\end{definition}

Clearly we could multiply or divide every exponent of $q$ in any inequality $A(q) \GEQ B(q)$ by some common factor or divisor to trivially obtain an equally valid inequality. So, if the exponents share no common integer factor greater than 1, we consider the inequality to be irreducible. 
So far, when examining an irreducible inequality of the form
\begin{equation}\label{GenericQproductIneq}
\frac{1}{\prod_{r=1}^{s}(1-q^{a_r})} \GEQ \frac{1}{\prod_{r=1}^{s}(1-q^{b_r})},
\end{equation}
where $a_1 \leq \cdots \leq a_s$ and $b_1 \leq \cdots \leq b_s$, it has been the case that $a_1=1$.
In 2012, at the Ramanujan 125 Conference in Gainesville, Florida, Hamza Yesilyurt asked if the inclusion of the factor $(1-q)$ was necessary in all such irreducible inequalities. At the time, no one proffered a definitive answer, though we generally agreed that that was our experience, in that the irreducible inequalities we had seen at that point all included the factor $(1-q)$. Shortly after the conference, however, Berkovich and Grizzell proved, among other things in \cite{BGr2}, the following theorem which indicates that the inclusion of the factor $(1-q)$ is not necessary.
\begin{theorem}\label{xyzthm}
For any octuple of positive integers $(L,m,x,y,z,r,R,\rho)$, 
\begin{equation*}
\frac{1}{(q^x,q^y,q^z,q^{rx+Ry+\rho z};q^m)_L}\GEQ
\frac{1}{(q^{rx},q^{Ry},q^{\rho z},q^{x+y+z};q^m)_L}.
\end{equation*}
\end{theorem}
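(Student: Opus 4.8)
The plan is to reduce Theorem~\ref{xyzthm} to an injection between sets of partitions, built up in stages by isolating one factor at a time. Write the left-hand product as the generating function for partitions whose parts come from the multiset of residues $\{x,y,z,rx+Ry+\rho z\}$ with multiplicities allowed up to $L$ in each of the four ``colours'', and similarly for the right-hand product with colours $\{rx,Ry,\rho z,x+y+z\}$. Concretely, a partition counted on the right is a choice of nonnegative integers $(\alpha_1,\alpha_2,\alpha_3,\alpha_4)$, each at most $L$, contributing $\alpha_1(rx)+\alpha_2(Ry)+\alpha_3(\rho z)+\alpha_4(x+y+z)$ to $n$; a partition on the left is a choice $(\beta_1,\beta_2,\beta_3,\beta_4)$, each at most $L$, contributing $\beta_1 x+\beta_2 y+\beta_3 z+\beta_4(rx+Ry+\rho z)$. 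We must produce, for each $n$, an injection from the right-hand configurations of weight $n$ to the left-hand configurations of weight $n$.

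The key observation is the elementary ``bit-splitting'' identity: for a single factor, $\frac{1}{(q^{ca};q^m)_L}\GEQ$ is dominated after multiplication by appropriate companion factors. More usefully, I would exploit the decomposition
\begin{equation*}
\frac{1}{(q^{ca};q^m)_L}\;=\;\frac{1}{(q^{ca};q^{cm})_L}\cdot\frac{(q^{cm};q^{cm})_L}{(q^{ca};q^m)_L}\cdot\frac{1}{(q^{cm};q^{cm})_L},
\end{equation*}
or, more in the spirit of \cite{BGr2}, the factorization that trades a part of size $ca$ used with multiplicity up to $L$ for a part of size $a$ used with multiplicity up to $cL$: $\frac{1}{(q^{ca};q^{cm})_{L}}$ relates to $\frac{1}{(q^a;q^m)_{L}}$ only after a change of base, so the cleaner route is to keep the base $m$ fixed and instead handle the three ``scalings'' $r,R,\rho$ one at a time. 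Thus Step~1: prove the three-variable prototype
\begin{equation*}
\frac{1}{(q^x,q^{rx+Ry+\rho z};q^m)_L}\GEQ\frac{1}{(q^{rx},q^{x+y+z-(Ry+\rho z)+(Ry+\rho z)};q^m)_L}
\end{equation*}
is not literally what we want; instead I would prove the fully general one-factor-at-a-time lemma: if $\frac{1}{(q^{a_1},\dots,q^{a_s};q^m)_L}\GEQ\frac{1}{(q^{b_1},\dots,q^{b_s};q^m)_L}$ and $c\geq1$, then replacing one matched pair $(a_i,b_i)$ by $(a_i, c\,b_i)$ on the left and $(c\,a_i, b_i)$ on the right preserves the inequality --- equivalently, $\frac{1}{(q^{a},q^{cb};q^m)_L}\GEQ\frac{1}{(q^{ca},q^{b};q^m)_L}$ whenever the ``base'' inequality $\frac{1}{(q^a;q^m)_L}\GEQ\frac{1}{(q^b;q^m)_L}$ degenerates (here $a=b$ gives equality, which is exactly the starting point). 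Applying this three times --- first multiplying the $x$-slot by $r$, then the $y$-slot by $R$, then the $z$-slot by $\rho$, each time on opposite sides of a trivial equality --- transforms $\frac{1}{(q^x,q^y,q^z,q^{x+y+z};q^m)_L}\GEQ\frac{1}{(q^x,q^y,q^z,q^{x+y+z};q^m)_L}$ into the claimed inequality.

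Step~2, then, is the core combinatorial content: the one-variable scaling lemma $\frac{1}{(q^{a},q^{cb};q^m)_L}\GEQ\frac{1}{(q^{ca},q^{b};q^m)_L}$. I would prove this by an explicit injection on pairs $(\alpha,\beta)$ with $0\le\alpha,\beta\le L$: from a right-hand pair with weight $\alpha(ca)+\beta b$, write $\alpha = q_1 + \text{(remainder)}$ only if that helps; more robustly, observe the weight is $c(a\alpha) + b\beta$, and we want to land on a left-hand pair $(\alpha',\beta')$ with $a\alpha' + c b\beta' = ca\alpha + b\beta$, i.e. $a(\alpha'-c\alpha) = b(\beta - c\beta')$; choosing $\beta' = \lfloor \beta/c\rfloor$ and absorbing the residue into $\alpha'$ works provided $\alpha' = c\alpha + (b/a)(\beta - c\beta')$ stays an integer in $[0,L]$ --- the integrality is not automatic, so the honest injection must work at the level of the full partition (all four colours at once), using the extra room in the other three slots to absorb fractional discrepancies, exactly as in the multi-parameter injections of \cite{BGr} and \cite{BGr2}. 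I expect \emph{this} --- constructing a single global injection that simultaneously handles all three scalings and proving it is well-defined (stays within the multiplicity bound $L$) and injective --- to be the main obstacle; the bookkeeping of residues modulo $r$, $R$, $\rho$ and the verification of injectivity is where the real work lies, and it is presumably dispatched by the machinery developed in the authors' earlier papers, which I would invoke and then specialize.
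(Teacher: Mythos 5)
Your proposal does not reach a proof, and two of its load-bearing steps are incorrect. First, the combinatorial model is wrong: $\frac{1}{(q^{rx};q^m)_L}=\prod_{j=0}^{L-1}(1-q^{rx+jm})^{-1}$ generates partitions into the $L$ \emph{distinct} parts $rx,\,rx+m,\,\dots,\,rx+(L-1)m$, each with \emph{unbounded} multiplicity; it is not a choice of a single multiplicity $\alpha_1\leq L$ for the part $rx$. Any injection must therefore act on genuine partitions whose parts carry both a residue label and a shift index $j$, which is exactly where the difficulty lives. Second, the ``one-variable scaling lemma'' $\frac{1}{(q^{a},q^{cb};q^m)_L}\GEQ\frac{1}{(q^{ca},q^{b};q^m)_L}$ is false in general: take $a=1$, $b=c=2$, $L=1$; the left side is $\bigl((1-q)(1-q^4)\bigr)^{-1}$ and the right side is $(1-q^2)^{-2}$, whose coefficients of $q^2$ are $1$ and $2$ respectively --- and this is so even though the ``base'' inequality $\frac{1}{1-q}\GEQ\frac{1}{1-q^2}$ holds. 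In the only case you actually justify the lemma ($a=b$) it is a tautology, and iterating from the tautology cannot produce the theorem anyway: after ``scaling the $x$-slot'' the fourth exponent on the left would need to become $rx+y+z$, which is not $r(x+y+z)$, so the intermediate products are not of the form your lemma addresses. A factor-at-a-time reduction is structurally blocked here because the single exponent $rx+Ry+\rho z$ couples all three scalings at once.

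Once these steps are removed, what remains is the entire content of the theorem, which you defer to unspecified ``machinery.'' For the record, the present manuscript does not prove Theorem \ref{xyzthm} at all --- it quotes the result from \cite{BGr2} --- and the proof given there is a single global injection defined part-by-part in the style of the atlas of Section \ref{sec:injection}: identities such as $rx+(i-1)m=\bigl(x+(i-1)m\bigr)+(r-1)\,x$ are used to split each part of a right-hand partition into parts available on the left while preserving the norm, and the real work is the bookkeeping that makes this splitting invertible (so that the multiplicities of the smallest parts $x$, $y$, $z$ arising from the several sources can be disentangled). None of that bookkeeping appears in your proposal, so the core of the argument is missing.
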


The main object of the present manuscript is the following new theorem which significantly generalizes Theorem \ref{BerkGrizThm} and at the same time provides another source of nontrivial $q$-product inequalities of the form \eqref{GenericQproductIneq} with $a_1 > 1$.
\begin{theorem} \label{MainThm}
For any positive integers $m$, $n$, $y$, and $z$, with $\gcd(n,y)=1$, and integers $K$ and $L$, with $K \geq L \geq 0$, 
\begin{equation}
\frac{1}{(q^z;q^{m})_K (q^{nyz};q^{nm})_L} \GEQ \frac{1}{(q^{yz};q^{m})_K (q^{nz};q^{nm})_L}. \label{MainIneq}
\end{equation}
\end{theorem}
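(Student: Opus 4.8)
\emph{Proof plan.}\quad The plan is to prove \eqref{MainIneq} by constructing an explicit injection that preserves the integer being partitioned; comparing coefficients of $q^N$ for every $N$ then yields $\GEQ$ at once. First I would reinterpret the two sides. The left-hand side of \eqref{MainIneq} equals $\sum q^{w}$ over all pairs $(\mathbf{c},\mathbf{e})$ with $\mathbf{c}\in\mathbb{Z}_{\ge 0}^{K}$, $\mathbf{e}\in\mathbb{Z}_{\ge 0}^{L}$ and $w=\sum_{j=0}^{K-1}c_j(z+jm)+\sum_{j=0}^{L-1}e_j\,n(yz+jm)$, while the right-hand side equals $\sum q^{w}$ over pairs $(\mathbf{a},\mathbf{b})$ with $w=\sum_{j=0}^{K-1}a_j(yz+jm)+\sum_{j=0}^{L-1}b_j\,n(z+jm)$. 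In words: the left side counts partitions with parts from $A_1\cup A_2$ and the right side partitions with parts from $B_1\cup B_2$, where $A_1=\{z+jm\colon 0\le j<K\}$, $A_2=\{n(yz+jm)\colon 0\le j<L\}$, $B_1=\{yz+jm\colon 0\le j<K\}$, and $B_2=\{n(z+jm)\colon 0\le j<L\}$. The structure that makes everything work is the ``crossing'' relation: $A_2$ is $n$ times the first $L$ terms of $B_1$, and $B_2$ is $n$ times the first $L$ terms of $A_1$; moreover $K\ge L$ guarantees $z+jm\in A_1$ and $yz+jm\in B_1$ for all $j<L$. (If $K=0$ both products equal $1$; so assume $K\ge 1$, hence $z\in A_1$.) It therefore suffices to build a weight-preserving injection $\Phi\colon(\mathbf{a},\mathbf{b})\mapsto(\mathbf{c},\mathbf{e})$.

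I would define $\Phi$ by three local, weight-preserving moves, each producing only parts in $A_1\cup A_2$. (1)~Dissolve each part $n(z+jm)\in B_2$ into $n$ copies of $z+jm\in A_1$. (2)~For each $j$ with $0\le j<L$, write $a_j=n d_j+r_j$ with $0\le r_j<n$; bundle $n d_j$ of the $a_j$ copies of $yz+jm$ into $d_j$ copies of $n(yz+jm)\in A_2$, and replace each of the remaining $r_j$ copies of $yz+jm$ by one copy of $z+jm$ together with $y-1$ copies of $z$ (both in $A_1$). (3)~For each $j$ with $L\le j<K$, replace every copy of $yz+jm$ by one copy of $z+jm$ together with $y-1$ copies of $z$. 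Since $(z+jm)+(y-1)z=yz+jm$, $\;nd_j\cdot(yz+jm)=d_j\cdot n(yz+jm)$, and $n\cdot(z+jm)$ equals one part $n(z+jm)$, the map $\Phi$ is well defined and preserves $w$; the hypothesis $K\ge L$ was used to know that the parts produced in moves (1) and (2) lie in $A_1\cup A_2$.

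The heart of the argument is injectivity, which I would verify by exhibiting the inverse on the image $(\mathbf{c},\mathbf{e})$. The multiplicity $e_j$ of the part $n(yz+jm)$ receives contributions only from move (2), so $e_j=d_j=\lfloor a_j/n\rfloor$ for $0\le j<L$. For $L\le j<K$, the part $z+jm$ arises only in move (3), so $c_j=a_j$. For $1\le j<L$, the part $z+jm$ arises only in moves (1) and (2), so $c_j=n b_j+r_j$ with $0\le r_j<n$, whence $r_j=c_j\bmod n$, $b_j=\lfloor c_j/n\rfloor$, and $a_j=n d_j+r_j$. Finally the multiplicity of the smallest part $z$ satisfies
\[
c_0=n b_0+y r_0+(y-1)\!\left(\sum_{1\le j<L} r_j+\sum_{L\le j<K} a_j\right),\qquad r_0:=a_0\bmod n\in\{0,\dots,n-1\}.
\]
Every term on the right except $n b_0+y r_0$ is already known, so $n b_0+y r_0$ is determined; and because $\gcd(n,y)=1$, reducing $n b_0+y r_0$ modulo $n$ pins down $r_0$ uniquely in $\{0,\dots,n-1\}$, hence $b_0$, hence $a_0=n d_0+r_0$. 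Thus $(\mathbf{a},\mathbf{b})$ is recovered and $\Phi$ is injective, which proves \eqref{MainIneq}. (When $L=0$ the second factor is trivial, moves (1)--(2) are vacuous, and $a_0$ is recovered from $c_0=y a_0+(y-1)\sum_{1\le j<K}a_j$ without invoking the $\gcd$ hypothesis.)

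The step I expect to be the real obstacle is finding exactly this bookkeeping. The idea one must hit upon is that the ``leftover'' copies --- fewer than $n$ of each part of $B_1$ --- have to be inflated into copies of the single smallest available part $z$, so that all of the residual data is funneled into the one coordinate $c_0$; it is precisely there that coprimality of $n$ and $y$ separates $b_0$ from $r_0$. Keeping the index ranges ($j<L$ versus $L\le j<K$) consistent, and tracking where the hypothesis $K\ge L$ is needed, is the delicate part; once the moves are correctly set up, checking weight-preservation and writing down the inverse is routine. As a sanity check, specializing $(m,n,y,z,K,L)=(y+1,2,y,1,2L,L)$ with $y$ odd collapses \eqref{MainIneq} to Theorem~\ref{BerkGrizThm}, so the plan at least has the right scope.
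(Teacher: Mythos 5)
Your proposal is correct and is essentially the paper's own proof: your moves (1)--(3) are exactly the atlas in Table \ref{table:atlas} (dissolving $(nz)_i$ into $n$ copies of $z_i$, bundling $n$-tuples of $(yz)_i$ into $(nyz)_i$, and inflating the fewer-than-$n$ leftovers and the $(yz)_i$ with $i>L$ into $z_i$ plus copies of $z_1$), and your inverse, including the use of $\gcd(n,y)=1$ to separate $b_0$ from $r_0$ in the single coordinate $c_0$, matches the paper's $\mu(\pi_1)$/$C(\pi_1)$ construction in section \ref{sec:inverse}. No gaps.
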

By taking $z=1$, $n=2$, $m=y+1$, and $K=2L$, \eqref{MainIneq} becomes
\begin{equation*}
\frac{1}{(q,q^{y+2},q^{2y};q^{2y+2})_L} \GEQ \frac{1}{(q^2,q^{y},q^{2y+1} ;q^{2y+2})_L},
\end{equation*}
which yields Theorem \ref{BerkGrizThm}.

We note that the products in \eqref{MainIneq} can be interpreted as 
\begin{align}
\frac{1}{(q^z;q^{m})_K (q^{nyz};q^{nm})_L} &= 1 + \sum_{x=1}^\infty P_{1}(x,K,L,m,n,y,z) q^{x} \label{firstproduct}\\
\intertext{and}
\frac{1}{(q^{yz};q^{m})_K (q^{nz};q^{nm})_L} &= 1 + \sum_{x=1}^\infty P_{2}(x,K,L,m,n,y,z) q^{x}, \label{secondproduct}
\end{align}
where  $P_{1}(x,K,L,m,n,y,z)$ denotes the number of partitions of $x$ into parts congruent to $z$ modulo $m$, with the largest part not exceeding $(K-1)m+z$, and parts congruent to $nyz$ modulo $nm$, with the largest part not exceeding $(L-1)nm+nyz$; and $P_{2}(x,K,L,m,n,y,z)$ denotes the number of partitions of $x$ into parts congruent to $yz$ modulo $m$, with the largest part not exceeding $(K-1)m+yz$, and parts congruent to $nz$ modulo $nm$, with the largest part not exceeding $(L-1)nm+nz$. Also, it is possible that the same part could arise in multiple ways; in this case, we may simply introduce another distinguishing feature, such as assigning different colors to the parts with different origins, in order to tell them apart.

In the next section, we define some notation that we will use to simplify the presentation of the proofs. The proof of Theorem \ref{MainThm} is accomplished by constructing an injection in section \ref{sec:injection}. In section \ref{sec:inverse} we provide the inverse map to the injection (thus supporting the claim that the map constructed in section \ref{sec:injection} is indeed an injection).
In section \ref{sec:examples}, we provide examples of the injection in action.

In section \ref{sec:generalization}, we prove the following dual to Theorem \ref{MainThm}, in which the $K$ and $L$ on the right-hand side of \eqref{MainIneq} are swapped.
\begin{theorem} \label{DualThm}
For any positive integers $m$, $n$, $y$, and $z$, with $\gcd(n,y)=1$, and integers $K$ and $L$, with $K \geq L \geq 0$, 
\begin{equation}
\frac{1}{(q^z;q^{m})_K (q^{nyz};q^{nm})_L} \GEQ \frac{1}{(q^{yz};q^{m})_L (q^{nz};q^{nm})_K}. \label{DualIneq}
\end{equation}
\end{theorem}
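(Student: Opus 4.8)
The plan is to deduce Theorem~\ref{DualThm} from Theorem~\ref{MainThm} rather than to re-run the injection from scratch. The key observation is that the right-hand side of \eqref{DualIneq} differs from the right-hand side of \eqref{MainIneq} only in that the finitization parameters $K$ and $L$ have been exchanged on the two factors. So it suffices to prove the ``internal'' inequality
\begin{equation*}
\frac{1}{(q^{yz};q^{m})_K (q^{nz};q^{nm})_L} \GEQ \frac{1}{(q^{yz};q^{m})_L (q^{nz};q^{nm})_K},
\end{equation*}
and then chain it with \eqref{MainIneq} using transitivity of $\GEQ$ (which is immediate from the definition, since $\GEQ$ just compares coefficients termwise, and all the series involved have non-negative coefficients so the inequalities compose). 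Thus the whole content of Theorem~\ref{DualThm} reduces to the claim that, for $K \geq L$, moving a block of $K-L$ extra factors from the modulus-$m$ progression at residue $yz$ over to the modulus-$nm$ progression at residue $nz$ can only decrease coefficients.

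To prove that internal inequality I would cancel the common factor $(q^{yz};q^m)_L (q^{nz};q^{nm})_L$ from both sides; after clearing, the claim becomes
\begin{equation*}
\frac{1}{(q^{yz+Lm},q^{yz+(L+1)m},\dots,q^{yz+(K-1)m};q^m)} \GEQ \frac{1}{(q^{nz+Lnm},q^{nz+(L+1)nm},\dots,q^{nz+(K-1)nm};q^m)},
\end{equation*}
i.e. a statement of the generic form \eqref{GenericQproductIneq} with $s = K-L$ parts on each side. The natural strategy is an injection: given a partition counted by the right-hand product---that is, a partition all of whose parts lie in the $K-L$ arithmetic progressions $\{nz + jnm : L \le j \le K-1\}$---produce a partition into parts from the progressions $\{yz + im : L \le i \le K-1\}$ of the same total. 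Here one exploits $\gcd(n,y)=1$: the map should send a part $nz + jnm = nz(1 + jm)$ (appropriately) to a combination of parts of the target shape, splitting each given part into $n$ equal pieces of size $z(1+jm)$ and then regrouping, using that multiplication by $y$ permutes residues suitably modulo the relevant modulus. One has to be careful that the resulting pieces land in the allowed window $L \le i \le K-1$; this is where the hypothesis $K \ge L$ and the precise bookkeeping of the bounds $(K-1)m+\cdots$ in the interpretations \eqref{firstproduct}--\eqref{secondproduct} enter.

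Alternatively---and this may be cleaner---one can try to realize the internal inequality as an instance of Theorem~\ref{MainThm} itself (or of Theorem~\ref{BGThm} applied factor by factor, or of a telescoping over the index $j$ from $L$ to $K-1$): for each single $j$ in that range, compare $1/(q^{yz+jm};q^m)_{\infty}$-type local pieces against $1/(q^{nz+jnm};q^{nm})_{\infty}$-type pieces, then multiply. The anti-telescoping / product-of-inequalities principle (if $A_i \GEQ B_i$ and all are series with non-negative coefficients, then $\prod A_i \GEQ \prod B_i$) would then assemble the full statement. The main obstacle I anticipate is precisely the boundary bookkeeping: the finitizations cut off the largest parts at slightly asymmetric places ($(K-1)m+z$ versus $(K-1)nm+nyz$, etc.), so one must verify that the injection (or the factorwise comparison) respects these truncations exactly, and that nothing spills past the allowed largest part when $K$ and $L$ are swapped. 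Once the termwise/boundary compatibility is pinned down, transitivity finishes the proof with essentially no extra work.
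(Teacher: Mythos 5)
Your reduction breaks at the very first step: the ``internal'' inequality
\begin{equation*}
\frac{1}{(q^{yz};q^{m})_K (q^{nz};q^{nm})_L} \GEQ \frac{1}{(q^{yz};q^{m})_L (q^{nz};q^{nm})_K}
\end{equation*}
is false in general, so there is no chain of $\GEQ$'s leading from the right-hand side of \eqref{MainIneq} to the right-hand side of \eqref{DualIneq}. Take $K=1$ and $L=0$: the claim becomes $\frac{1}{1-q^{yz}} \GEQ \frac{1}{1-q^{nz}}$, which requires $yz \divides nz$, i.e.\ $y \divides n$; since $\gcd(n,y)=1$, this fails for every $y>1$ (e.g.\ $y=2$, $n=5$, $z=1$: the coefficient of $q^{5}$ is $0$ on the left and $1$ on the right). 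The same obstruction kills your ``factor-by-factor'' alternative, since $\frac{1}{1-q^{yz+jm}} \GEQ \frac{1}{1-q^{nz+jnm}}$ would force $yz+jm$ to divide $n(z+jm)$. The underlying point is that the two right-hand sides of \eqref{MainIneq} and \eqref{DualIneq} are simply incomparable --- swapping $K$ and $L$ between the two factors is not a monotone operation --- which is exactly why the paper does \emph{not} derive Theorem \ref{DualThm} from Theorem \ref{MainThm}. The logical flow is the reverse of what you assume: Theorems \ref{MainThm} and \ref{DualThm} are each proved by a direct injection, and only Theorem \ref{GenThm} is then obtained by transitivity together with the trivial operation of dropping factors.

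What does work, and what the paper does, is a direct injection into the \emph{left}-hand side of \eqref{DualIneq}, whose crucial feature is that all $K$ parts $z_i = z+(i-1)m$, $1\le i\le K$, are available as targets. Each part $(nz)_i = n\bigl(z+(i-1)m\bigr)$, now for \emph{every} $1\le i\le K$, splits into $n$ copies of $z_i$; the parts $(yz)_i$ with $1\le i\le L$ are handled exactly as in the proof of Theorem \ref{MainThm}, namely a multiplicity $nk+j$ of $(yz)_i$ is sent to $\partition{z_1^{(y-1)j},z_i^j,(nyz)_i^k}$ (with the $i=1$ case absorbing $y$ copies of $z_1$ per residue instead). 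Your proposed intermediate product lacks the parts $z_i$ altogether, so the natural $n$-fold splitting of $(nz)_i$ has nowhere to land; that, and not any boundary bookkeeping with the truncations, is the reason the route through $\frac{1}{(q^{yz};q^m)_K(q^{nz};q^{nm})_L}$ cannot be repaired.
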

\noindent We then show how Theorems \ref{MainThm} and \ref{DualThm} can be generalized to the following.
\begin{theorem}\label{GenThm}
For any positive integers $m$, $n$, $y$, and $z$, with $\gcd(n,y)=1$; integers $K$ and $L$, with $K \geq L \geq 0$; and integers $S$ and $T$, with $\max(S,T)\leq K$ and $0\leq \min(S,T)\leq L$; 
\begin{equation}
\frac{1}{(q^z;q^{m})_K (q^{nyz};q^{nm})_L} \GEQ \frac{1}{(q^{yz};q^{m})_S (q^{nz};q^{nm})_T}. \label{GenIneq}
\end{equation}
\end{theorem}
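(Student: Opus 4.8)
The plan is to deduce Theorem~\ref{GenThm} from Theorems~\ref{MainThm} and~\ref{DualThm} by a monotonicity (or ``absorption'') argument, exploiting the fact that shrinking a finitization parameter on the \emph{smaller} side of a $\GEQ$ relation between reciprocals of $q$-Pochhammer products only makes that side smaller. First I would record the elementary lemma that for any $q$-series $A(q),B(q),C(q)$ with nonnegative coefficients, $A(q)\GEQ B(q)C(q)$ and $B(q)\GEQ B'(q)$ (with $B'$ having nonnegative coefficients and, crucially, $B(q)=B'(q)\cdot D(q)$ for some $D$ with nonnegative coefficients) together imply $A(q)\GEQ B'(q)C(q)$; concretely, $1/(q^{yz};q^m)_S = \frac{1}{(q^{yz};q^m)_S}$ is obtained from $1/(q^{yz};q^m)_K$ by multiplying by the polynomial $(q^{(S)m+yz},q^{(S+1)m+yz},\dots,q^{(K-1)m+yz};q^m)_1$-type finite product, i.e.\ $\frac{1}{(q^{yz};q^m)_S} = \frac{(q^{Sm+yz};q^m)_{K-S}}{(q^{yz};q^m)_K}\cdot\frac{1}{1}$ — wait, more simply, $\frac{1}{(q^{yz};q^m)_K} = \frac{1}{(q^{yz};q^m)_S}\cdot\frac{1}{(q^{Sm+yz};q^m)_{K-S}}$, so $\frac{1}{(q^{yz};q^m)_S}$ is a ``sub-series'' of $\frac{1}{(q^{yz};q^m)_K}$ in the sense of $\LEQ$. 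Hence $\frac{1}{(q^{yz};q^m)_K}(q^{nz};q^{nm})_L$-type products dominate their shrunk-parameter counterparts.

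Next I would split into the two cases $S\geq T$ and $S\leq T$, which is where the two input theorems get used respectively. If $S\leq T$, so that $\min(S,T)=S\leq L$ and $\max(S,T)=T\leq K$, then starting from Theorem~\ref{DualThm} (with its roles: $S$ playing the part of $L$ and $T$ playing the part of $K$) — actually I would match parameters so that Theorem~\ref{DualThm} gives $\frac{1}{(q^z;q^m)_K(q^{nyz};q^{nm})_L}\GEQ\frac{1}{(q^{yz};q^m)_L(q^{nz};q^{nm})_K}$, and then shrink $L\rightsquigarrow S$ and $K\rightsquigarrow T$ on the right-hand side. The point is that reducing either parameter on the right multiplies that side by a power series with nonnegative coefficients' reciprocal... no: reducing the parameter \emph{divides out} a factor with nonnegative coefficients from the denominator product, hence multiplies the whole reciprocal by a \emph{polynomial} (namely a truncated product $(q^{\bullet};q^{\bullet})_{\text{difference}}$) — but that polynomial has \emph{mixed-sign} coefficients, so this naive step is wrong. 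Instead the correct statement is that $\frac{1}{(q^{yz};q^m)_S}\LEQ\frac{1}{(q^{yz};q^m)_L}$ coefficientwise when $S\le L$, because $\frac{1}{(q^{yz};q^m)_L}-\frac{1}{(q^{yz};q^m)_S} = \frac{1}{(q^{yz};q^m)_S}\left(\frac{1}{(q^{Sm+yz};q^m)_{L-S}}-1\right)$ and $\frac{1}{(q^{Sm+yz};q^m)_{L-S}}-1\GEQ 0$. Then multiplying two such coefficientwise inequalities for the two factors (legitimate since all series involved have nonnegative coefficients) gives $\frac{1}{(q^{yz};q^m)_S(q^{nz};q^{nm})_T}\LEQ\frac{1}{(q^{yz};q^m)_L(q^{nz};q^{nm})_K}$, and chaining with Theorem~\ref{DualThm} finishes the case $S\le T$. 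The case $S\ge T$ is symmetric but uses Theorem~\ref{MainThm} instead: there $\max(S,T)=S\le K$ and $\min(S,T)=T\le L$, and shrinking $K\rightsquigarrow S$, $L\rightsquigarrow T$ on the right-hand side of \eqref{MainIneq} works the same way.

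The main obstacle, and the only genuinely non-formal point, is the elementary coefficientwise monotonicity $\frac{1}{(q^{a};q^{b})_{s}}\LEQ\frac{1}{(q^{a};q^{b})_{t}}$ for $s\le t$, together with the fact that $\GEQ$ is preserved under multiplication by a series with nonnegative coefficients and under this kind of substitution. These are standard but should be stated carefully as lemmas, since the whole reduction rests on them and on checking that the constraints $\max(S,T)\le K$ and $0\le\min(S,T)\le L$ are exactly what is needed to apply Theorem~\ref{MainThm} in one case and Theorem~\ref{DualThm} in the other. Once those lemmas are in place, the proof of Theorem~\ref{GenThm} is a short two-case argument with no new combinatorics.
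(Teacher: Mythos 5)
Your proposal is correct and is essentially the paper's own argument: the paper deduces Theorem~\ref{GenThm} from Theorems~\ref{MainThm} and~\ref{DualThm}, transitivity of $\GEQ$, and the same monotonicity fact that deleting a factor $(1-q^{a})$ from the denominator can only decrease the coefficients. Your two-case split on $S\geq T$ versus $S\leq T$ and the identity $\frac{1}{(q^{a};q^{b})_{t}}-\frac{1}{(q^{a};q^{b})_{s}}=\frac{1}{(q^{a};q^{b})_{s}}\bigl(\frac{1}{(q^{a+sb};q^{b})_{t-s}}-1\bigr)\GEQ 0$ just make explicit what the paper leaves to the reader.
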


In section \ref{sec:invariant}, we discuss two partition invariants that are preserved by the injections we present, as well as the implications of this invariance, namely Theorems
\ref{Refinement1} and \ref{Refinement2}, which may be regarded as refinements of Theorems \ref{MainThm} and \ref{DualThm}, respectively.
Finally, in section \ref{sec:conclusion}, we conclude with a brief discussion of how Theorems \ref{MainThm} and \ref{DualThm} might relate to lecture hall partitions.

\section{Notation}\label{sec:notation}

Let $n>1$ and $y>1$ be positive integers with $\gcd(n,y)=1$. Let $z$ and $m$ be positive integers.

\begin{definition}
Let 
\begin{align*}
  z_i &:= z + (i-1)m \\%&& 1 \leq i \leq K\\
\intertext{and}
  (nyz)_i &:= nyz + (i-1)nm.%&& 1 \leq i \leq L.
\end{align*}
Then the product 
\begin{equation*}
  \frac{1}{(q^z;q^m)_K (q^{nyz};q^{nm})_L}
\end{equation*}
can be thought of as the generating function for the number of partitions into parts from the set $\{z_1, \dots, z_K, (nyz)_1, \dots, (nyz)_L\}$.
Similarly, let
\begin{align*}
  (yz)_i &:= yz + (i-1)m \\%&& 1 \leq i \leq K,\\
\intertext{and}
  (nz)_i &:= nz + (i-1)nm .%&& 1 \leq i \leq L.
\end{align*}
Then the product 
\begin{equation*}
  \frac{1}{(q^{yz};q^m)_K (q^{nz};q^{nm})_L}
\end{equation*}
can be thought of as the generating function for the number of partitions into parts from the set $\{(yz)_1, \dots, (yz)_K, (nz)_1, \dots, (nz)_L\}$.
\end{definition}
In the event of the same part occurring in more than one way, for example if $z_5 = (nyz)_2$, then by using the notation above we are implying that those parts can be distinguished from each other, as if they also had colors that were different. (Feel free to paint your own picture!)

\begin{definition}
Let $\num(p,\pi)$ denote the number of occurrences of the part $p$ in the partition $\pi$.
Let $Q(p,\pi)$ and $R(p,\pi)$ be the uniquely determined non-negative integers such that $\num(p,\pi)=n\cdot Q(p,\pi)+R(p,\pi)$ and $0 \leq R(p,\pi) \leq n-1$.
\end{definition}

\begin{definition}
Let \partition{1^{a_1},2^{a_2},3^{a_3},\dots,k^{a_k},\dots}, with $\sum a_k < \infty$, be the unique partition $\pi$ such that $\num(k,\pi)=a_k$ for every integer $k\geq1$.
%In other words, the bases represent the different possible parts and the exponents represent the frequencies of those parts (similar to the notation used in \cite{An2} except with different punctuation). If $a_k=0$ for some $k$, that term may be omitted from the notation.
\end{definition}

\begin{definition}
Let the norm of a partition $\pi=\partition{1^{a_1},2^{a_2},3^{a_3},\dots,k^{a_k},\dots}$, denoted $|\pi|$, be given by $|\pi|=\sum a_k k$.
\end{definition}

\section{The Injection}\label{sec:injection}

Suppose that $L=0$; then an injection mapping a partition $\pi_2$ (counted by $P_2(x,K,0,m,n,y,z)$), into a partition $\pi_1$ (counted by $P_1(x,K,0,m,n,y,z)$), where $|\pi_2|=|\pi_1|=x$, is given by
\begin{equation*}
\num(z_i,\pi_1) = 
  \begin{cases}
    \num((yz)_i,\pi_2) & \text{if } 1 < i \leq K,\\
    y \cdot \num((yz)_1,\pi_2) + (y-1)\cdot\displaystyle\sum_{1 < j \leq K} \num((yz)_j,\pi_2) &\text{if }i=1.
  \end{cases}
\end{equation*}

Suppose instead that $L>0$; then an injection mapping a partition $\pi_2$ (counted by $P_2(x,K,L,m,n,y,z)$), into a partition $\pi_1$ (counted by $P_1(x,K,L,m,n,y,z)$), where $|\pi_2|=|\pi_1|=x$, is given by
\begin{equation*}
\num((nyz)_i,\pi_1) = Q((yz)_i,\pi_2) \quad \text{(if $1\leq i \leq L$)}
\end{equation*}
and
\begin{equation*}
\num(z_i,\pi_1) = 
  \begin{cases}
    \num((yz)_i,\pi_2) & \text{if } L < i \leq K,\\
    n\cdot\num((nz)_i,\pi_2) + R((yz)_i,\pi_2) & \text{if } 1 < i \leq L,\\
    n\cdot \num((nz)_1,\pi_2) + y\cdot R((yz)_1,\pi_2) + (y-1)\cdot(A+B) &\text{if }i=1,
  \end{cases}
\end{equation*}
where
\begin{equation*}
A :=  \sum_{1 < j \leq L} R((yz)_j,\pi_2) \quad \text{ and } \quad B:= \sum_{L < j \leq K} \num((yz)_j,\pi_2).
\end{equation*}
We note that $x$ and $m$ appear nowhere in the statement of the injection, and that the injection is essentially independent of $z$ as well; contrast this with the fact that $n$, $y$, $K$, and $L$ all play very crucial roles in the injection.

Readers may find that it is helpful to consult the atlas in Table \ref{table:atlas} and the examples in section \ref{sec:examples} to better understand the injection.

\begin{table}[ht]
\caption{An atlas for the injection.}\label{table:atlas}
\begin{tabular}{c@{${}\mapsto{}$}cl}\hline
\vphantom{$\dfrac{1}{2}$}\partition{(nz)_i} & \partition{z_i^n} & for $1 \leq i \leq L$\\[0.5ex]
\partition{(yz)_1^{nk+j}} & \partition{z_1^{yj},(nyz)_1^k} & for $0 \leq j < n$, $L>0$\\[0.5ex]
%\partition{(yz)_1} & \partition{z_1^{y}} & for $L=0$\\[0.5ex]
\partition{(yz)_i^{nk+j}} & \partition{z_1^{(y-1)j},z_i^j,(nyz)_i^k} & for $1 < i \leq L$, $0 \leq j < n$\\[0.5ex]
\partition{(yz)_i} & \partition{z_1^{y-1},z_i} & for $L < i \leq K$\\[0.5ex]
\hline
\end{tabular}
\end{table}

\section{The Inverse}\label{sec:inverse}

Suppose that $L=0$; then the inverse mapping $\pi_1 \mapsto \pi_2$ is given by
\begin{equation*}
  \num((yz)_i,\pi_2) = 
  \begin{cases}
    \num(z_i,\pi_1) & \text{if } 1 < i \leq K,\\
    \num(z_1,\pi_1)-\dfrac{y-1}{y}\cdot\displaystyle\sum_{1 \leq j \leq K} \num(z_j,\pi_1) &\text{if }i=1.
  \end{cases}
\end{equation*}
Clearly in this case a partition $\pi_1$ is mapped to if and only if 
\begin{equation*}
  \num(z_1,\pi_1) \geq (y-1)\cdot\sum_{2 \leq j \leq K} \num(z_j,\pi_1).
\end{equation*}

When $L>0$, the inverse is a little bit more complicated. First, it is easy to see from the definition of the injection that $A$ and $B$ satisfy
\begin{equation*}
  A = \sum_{1 < j \leq L} R((yz)_j,\pi_2) = \sum_{1 < j \leq L} R(z_j,\pi_1) 
\end{equation*}
and
\begin{equation*}
  B = \sum_{L < j \leq K} \num((yz)_j,\pi_2) = \sum_{L < j \leq K} \num(z_j,\pi_1).
\end{equation*}
If we let $C(\pi_1)$ be the least nonnegative residue of $\overline{y}\cdot\mu(\pi_1)$ modulo $n$, let $\overline{y}$ be the multiplicative inverse of $y$ modulo $n$ (which is well-defined since $\gcd(n,y)=1$), take 
\begin{equation*}
A =  \sum_{1 < j \leq L} R(z_j,\pi_1) \quad \text{ and } \quad B= \sum_{L < j \leq K} \num(z_j,\pi_1),
\end{equation*}
and define
\begin{equation*}
  \mu(\pi_1) := \num(z_1,\pi_1) -(y-1)\cdot(A+B),
\end{equation*}
then for $L>0$ the inverse mapping $\pi_1 \mapsto \pi_2$ is given by
\begin{equation*}
\num((yz)_i,\pi_2) = 
  \begin{cases}
    \num(z_i,\pi_1) & \text{if } L < i \leq K,\\
    n\cdot\num((nyz)_i,\pi_1) + R(z_i,\pi_1) & \text{if } 1 < i \leq L,\\
    n\cdot \num((nyz)_1,\pi_1) + C(\pi_1) &\text{if }i=1,
  \end{cases}
\end{equation*}
and
\begin{equation*}
\num((nz)_i,\pi_2) =
  \begin{cases}
    Q(z_i,\pi_1) & \text{if } 1 < i \leq L,\\
    \dfrac{\mu(\pi_1)-y\cdot C(\pi_1)}{n} & \text{if } i=1.
  \end{cases}
\end{equation*}
In this case (when $L>0$) a partition $\pi_1$ gets mapped to if and only if $\mu(\pi_1)$ is a linear combination of the form $\mu(\pi_1)=a\cdot n + b \cdot y$, where $a\geq0$ and $0\leq b < n$. Note that if $\mu(\pi_1)$ is such a linear combination, then $a$ and $b$ are unique since $\gcd(n,y)=1$.

\section{Examples}\label{sec:examples}

If we take $z=1$, $n=2m-1$, $y=m-1$, and $K\geq 2m-2$, then we must, just by the nature of the arithmetic sequences involved, be dealing with duplicated parts, and thus require some means (e.g.\ \textbf{boldface} font) to distinguish them. So, for example, if we take $(K,L,m,n,y,z)=(4,2,3,5,2,1)$, then the domain partitions are constructed from the set of parts 
\[\{2_1,2_2,2_3,2_4,5_1,5_2\} = \{2,5,8,11,\mathbf{5},20\}\]
and the codomain partitions are constructed from the set of parts
\[\{1_1,1_2,1_3,1_4,10_1,10_2\} = \{1,4,7,10,\mathbf{10},25\}.\]
So, with $(K,L,m,n,y,z)=(4,2,3,5,2,1)$, if we consider, for example, partitions of $x=20$, then we have the complete injection shown in Table \ref{table:example1}.

\begin{table}[h]
\caption{The injection $\pi_2 \mapsto \pi_1$ for $(K,L,m,n,y,z)=(4,2,3,5,2,1)$, where $|\pi_2|=|\pi_1|=20$. Note that the last six lines of the table are not part of the injection, i.e.\ those $\pi_1$ have no pre-image $\pi_2$. Also, $5=2_2$, $\mathbf{5}=5_1$, $10=1_4$, and $\mathbf{10}=10_1$.}\label{table:example1}
\begin{tabular}{c@{\ $\mapsto$\ }cr@{$\ \ {}={}$}r@{$n + {}$}r@{$y$}}
\hline\vphantom{$\dfrac{1}{2}$}
$\pi_2$ & $\pi_1$ & $\mu(\pi_1)$ & $a$ & $b$\\[0.5ex]
\hline\vphantom{$\dfrac{1}{2}$}
\partition{20^{1}} & \partition{4^{5}} & $0$ & $0$ & $0$\\[0.5ex]
\partition{\mathbf{5}^{4}} & \partition{1^{20}} & $20$ & $4$ & $0$\\[0.5ex]
\partition{5^{1},\mathbf{5}^{3}} & \partition{1^{16},4^{1}} & $15$ & $3$ & $0$\\[0.5ex]
\partition{5^{2},\mathbf{5}^{2}} & \partition{1^{12},4^{2}} & $10$ & $2$ & $0$\\[0.5ex]
\partition{5^{3},\mathbf{5}^{1}} & \partition{1^{8},4^{3}} & $5$ & $1$ & $0$\\[0.5ex]
\partition{5^{4}} & \partition{1^{4},4^{4}} & $0$ & $0$ & $0$\\[0.5ex]
\partition{2^{1},8^{1},\mathbf{5}^{2}} & \partition{1^{13},7^{1}} & $12$ & $2$ & $1$\\[0.5ex]
\partition{2^{1},5^{1},8^{1},\mathbf{5}^{1}} & \partition{1^{9},4^{1},7^{1}} & $7$ & $1$ & $1$\\[0.5ex]
\partition{2^{1},5^{2},8^{1}} & \partition{1^{5},4^{2},7^{1}} & $2$ & $0$ & $1$\\[0.5ex]
\partition{2^{2},11^{1},\mathbf{5}^{1}} & \partition{1^{10},10^{1}} & $9$ & $1$ & $2$\\[0.5ex]
\partition{2^{2},8^{2}} & \partition{1^{6},7^{2}} & $4$ & $0$ & $2$\\[0.5ex]
\partition{2^{2},5^{1},11^{1}} & \partition{1^{6},4^{1},10^{1}} & $4$ & $0$ & $2$\\[0.5ex]
\partition{2^{5},\mathbf{5}^{2}} & \partition{1^{10},\mathbf{10}^{1}} & $10$ & $2$ & $0$\\[0.5ex]
\partition{2^{5},5^{1},\mathbf{5}^{1}} & \partition{1^{6},4^{1},\mathbf{10}^{1}} & $5$ & $1$ & $0$\\[0.5ex]
\partition{2^{5},5^{2}} & \partition{1^{2},4^{2},\mathbf{10}^{1}} & $0$ & $0$ & $0$\\[0.5ex]
\partition{2^{6},8^{1}} & \partition{1^{3},7^{1},\mathbf{10}^{1}} & $2$ & $0$ & $1$\\[0.5ex]
\partition{2^{10}} & \partition{\mathbf{10}^{2}} & $0$ & $0$ & $0$\\[0.5ex]
 & \partition{10^{1},\mathbf{10}^{1}} & $-1$ & $-1$ & $2$\\[0.5ex]
 & \partition{10^{2}} & $-2$ & $-2$ & $4$\\[0.5ex]
 & \partition{1^{1},4^{3},7^{1}} & $-3$ & $-1$ & $1$\\[0.5ex]
 & \partition{1^{2},4^{1},7^{2}} & $-1$ & $-1$ & $2$\\[0.5ex]
 & \partition{1^{2},4^{2},10^{1}} & $-1$ & $-1$ & $2$\\[0.5ex]
 & \partition{1^{3},7^{1},10^{1}} & $1$ & $-1$ & $3$\\[0.5ex]
\hline
\end{tabular}  
\end{table}

While it is nice to see an example with the actual numbers in it, as in Table \ref{table:example1}, it is easier to follow the injection patterns if the numbers are written using the subscript notation. Of course, this also has the added benefit of not requiring the use of some other means of distinguishing parts that happen to have the same value. So, we give the same slice of the injection, with $(K,L,m,n,y,z)=(4,2,3,5,2,1)$ and considering partitions of $x=20$, but instead written using subscript notation, in Table \ref{table:example1subscripted}.

\begin{table}[h]
\caption{Table \ref{table:example1} re-written in subscripted form.}\label{table:example1subscripted}
\begin{tabular}{c@{\ $\mapsto$\ }cr@{$\ \ {}={}$}r@{$n + {}$}r@{$y$}}
\hline\vphantom{$\dfrac{1}{2}$}
$\pi_2$ & $\pi_1$ & $\mu(\pi_1)$ & $a$ & $b$\\[0.5ex]
\hline\vphantom{$\dfrac{1}{2}$}
\partition{5_{2}^{1}} & \partition{1_{2}^{5}} & $0$ & $0$ & $0$\\[0.5ex]
\partition{5_{1}^{4}} & \partition{1_{1}^{20}} & $20$ & $4$ & $0$\\[0.5ex]
\partition{2_{2}^{1},5_{1}^{3}} & \partition{1_{1}^{16},1_{2}^{1}} & $15$ & $3$ & $0$\\[0.5ex]
\partition{2_{2}^{2},5_{1}^{2}} & \partition{1_{1}^{12},1_{2}^{2}} & $10$ & $2$ & $0$\\[0.5ex]
\partition{2_{2}^{3},5_{1}^{1}} & \partition{1_{1}^{8},1_{2}^{3}} & $5$ & $1$ & $0$\\[0.5ex]
\partition{2_{2}^{4}} & \partition{1_{1}^{4},1_{2}^{4}} & $0$ & $0$ & $0$\\[0.5ex]
\partition{2_{1}^{1},2_{3}^{1},5_{1}^{2}} & \partition{1_{1}^{13},1_{3}^{1}} & $12$ & $2$ & $1$\\[0.5ex]
\partition{2_{1}^{1},2_{2}^{1},2_{3}^{1},5_{1}^{1}} & \partition{1_{1}^{9},1_{2}^{1},1_{3}^{1}} & $7$ & $1$ & $1$\\[0.5ex]
\partition{2_{1}^{1},2_{2}^{2},2_{3}^{1}} & \partition{1_{1}^{5},1_{2}^{2},1_{3}^{1}} & $2$ & $0$ & $1$\\[0.5ex]
\partition{2_{1}^{2},2_{4}^{1},5_{1}^{1}} & \partition{1_{1}^{10},1_{4}^{1}} & $9$ & $1$ & $2$\\[0.5ex]
\partition{2_{1}^{2},2_{3}^{2}} & \partition{1_{1}^{6},1_{3}^{2}} & $4$ & $0$ & $2$\\[0.5ex]
\partition{2_{1}^{2},2_{2}^{1},2_{4}^{1}} & \partition{1_{1}^{6},1_{2}^{1},1_{4}^{1}} & $4$ & $0$ & $2$\\[0.5ex]
\partition{2_{1}^{5},5_{1}^{2}} & \partition{1_{1}^{10},10_{1}^{1}} & $10$ & $2$ & $0$\\[0.5ex]
\partition{2_{1}^{5},2_{2}^{1},5_{1}^{1}} & \partition{1_{1}^{6},1_{2}^{1},10_{1}^{1}} & $5$ & $1$ & $0$\\[0.5ex]
\partition{2_{1}^{5},2_{2}^{2}} & \partition{1_{1}^{2},1_{2}^{2},10_{1}^{1}} & $0$ & $0$ & $0$\\[0.5ex]
\partition{2_{1}^{6},2_{3}^{1}} & \partition{1_{1}^{3},1_{3}^{1},10_{1}^{1}} & $2$ & $0$ & $1$\\[0.5ex]
\partition{2_{1}^{10}} & \partition{10_{1}^{2}} & $0$ & $0$ & $0$\\[0.5ex]
 & \partition{1_{4}^{1},10_{1}^{1}} & $-1$ & $-1$ & $2$\\[0.5ex]
 & \partition{1_{4}^{2}} & $-2$ & $-2$ & $4$\\[0.5ex]
 & \partition{1_{1}^{1},1_{2}^{3},1_{3}^{1}} & $-3$ & $-1$ & $1$\\[0.5ex]
 & \partition{1_{1}^{2},1_{2}^{1},1_{3}^{2}} & $-1$ & $-1$ & $2$\\[0.5ex]
 & \partition{1_{1}^{2},1_{2}^{2},1_{4}^{1}} & $-1$ & $-1$ & $2$\\[0.5ex]
 & \partition{1_{1}^{3},1_{3}^{1},1_{4}^{1}} & $1$ & $-1$ & $3$\\[0.5ex]
\hline
\end{tabular}  
\end{table}

Now when we take $z>1$, we get some truly unobvious partition inequalities. For example, if we now take $(K,L,m,n,y,z)=(3,2,4,3,4,3)$, then the domain partitions are constructed from the set 
\[\{12_1,12_2,12_3,9_1,9_2\} = \{12,16,20,9,21\}\]
and the codomain partitions are constructed from the set
\[\{3_1,3_2,3_3,36_1,36_2\} = \{3,7,11,36,48\}.\]
So, in this case Theorem \ref{MainThm} tells us that, for any positive integer $x$, the number of partitions into the parts 9, 12, 16, 20, and 21 is no greater than the number of partitions of $x$ into the parts 3, 7, 11, 36, and 48. This is rather unobvious since, among other things, $9+12+16+20+21 = 78 < 105 = 3+7+11+36+48$ and the largest domain part, 21, is considerably less than even the second largest codomain part, 36. Of course, it is also less obvious since we have neither 1 appearing as a part nor a common factor of all parts in either the domain parts list or the codomain parts list; it is in this way that we may obtain, from Theorem \ref{MainThm}, many examples of irreducible inequalities of the form \eqref{GenericQproductIneq} to answer Yesilyurt's question on the necessity of the factor $(1-q)$. 
We give the complete injection for $(K,L,m,n,y,z)=(3,2,4,3,4,3)$ when $x=60$ in Table \ref{table:example2}. 

\begin{table}[h]
\caption{The injection $\pi_2 \mapsto \pi_1$ for $(K,L,m,n,y,z)=(3,2,4,3,4,3)$, where $|\pi_2|=|\pi_1|=60$. Note that the last eight lines of the table are not part of the injection, i.e.\ those $\pi_1$ have no pre-image $\pi_2$.}\label{table:example2}
\begin{tabular}{c@{\ $\mapsto$\ }cr@{$\ \ {}={}$}r@{$n + {}$}r@{$y$}}
\hline\vphantom{$\dfrac{1}{2}$}
$\pi_2$ & $\pi_1$ & $\mu(\pi_1)$ & $a$ & $b$\\[0.5ex]
\hline\vphantom{$\dfrac{1}{2}$}
\partition{9_{1}^{2},9_{2}^{2}} & \partition{3_{1}^{6},3_{2}^{6}} & $6$ & $2$ & $0$\\[0.5ex]
\partition{12_{3}^{3}} & \partition{3_{1}^{9},3_{3}^{3}} & $0$ & $0$ & $0$\\[0.5ex]
\partition{12_{1}^{1},9_{1}^{3},9_{2}^{1}} & \partition{3_{1}^{13},3_{2}^{3}} & $13$ & $3$ & $1$\\[0.5ex]
\partition{12_{1}^{1},12_{2}^{3}} & \partition{3_{1}^{4},36_{2}^{1}} & $4$ & $0$ & $1$\\[0.5ex]
\partition{12_{1}^{2},9_{1}^{4}} & \partition{3_{1}^{20}} & $20$ & $4$ & $2$\\[0.5ex]
\partition{12_{1}^{2},12_{2}^{1},12_{3}^{1}} & \partition{3_{1}^{14},3_{2}^{1},3_{3}^{1}} & $8$ & $0$ & $2$\\[0.5ex]
\partition{12_{1}^{5}} & \partition{3_{1}^{8},36_{1}^{1}} & $8$ & $0$ & $2$\\[0.5ex]
 & \partition{3_{2}^{7},3_{3}^{1}} & $-6$ & $-2$ & $0$\\[0.5ex]
 & \partition{3_{1}^{1},3_{2}^{3},36_{1}^{1}} & $1$ & $-1$ & $1$\\[0.5ex]
 & \partition{3_{1}^{1},3_{2}^{5},3_{3}^{2}} & $-11$ & $-5$ & $1$\\[0.5ex]
 & \partition{3_{1}^{2},3_{2}^{1},3_{3}^{1},36_{1}^{1}} & $-4$ & $-4$ & $2$\\[0.5ex]
 & \partition{3_{1}^{2},3_{2}^{3},3_{3}^{3}} & $-7$ & $-5$ & $2$\\[0.5ex]
 & \partition{3_{1}^{3},3_{2}^{1},3_{3}^{4}} & $-12$ & $-4$ & $0$\\[0.5ex]
 & \partition{3_{1}^{7},3_{2}^{4},3_{3}^{1}} & $1$ & $-1$ & $1$\\[0.5ex]
 & \partition{3_{1}^{8},3_{2}^{2},3_{3}^{2}} & $-4$ & $-4$ & $2$\\[0.5ex]
\hline
\end{tabular}  
\end{table}

\section{Proofs of the Dual and the Generalization}\label{sec:generalization}

As stated in the introduction, Theorem \ref{MainThm} can really be viewed as a specific instance of Theorem \ref{GenThm}, just with $S=K$ and $T=L$. Our proof of Theorem \ref{GenThm} relies on both Theorem \ref{MainThm}, which was proved in previous sections, and Theorem \ref{DualThm}, in which $K$ and $L$ are switched on the right-hand side of the inequality. So, we begin by proving Theorem \ref{DualThm}.
\begin{proof}[Proof of Theorem \ref{DualThm}]
As before, we can construct an injection to serve as the proof. The main difference between the previous injection and this current injection is how we handle parts with subscripts greater than $L$: we can now handle $(nz)_i$ for $i>L$ just as for $i\leq L$, and we do not need to handle $(yz)_i$ for $i>L$ any more. In fact, this makes the atlas shorter (see Table \ref{table:dual-atlas}).
\begin{table}[ht]
\caption{An atlas for the injection in the proof of Theorem \ref{DualThm}.}\label{table:dual-atlas}
\begin{tabular}{c@{${}\mapsto{}$}cl}\hline
\vphantom{$\dfrac{1}{2}$}\partition{(nz)_i} & \partition{z_i^n} & for $1 \leq i \leq K$\\[0.5ex]
\partition{(yz)_1^{nk+j}} & \partition{z_1^{yj},(nyz)_1^k} & for $0 \leq j < n$, $L>0$\\[0.5ex]
%\partition{(yz)_1} & \partition{z_1^{y}} & for $L=0$\\[0.5ex]
\partition{(yz)_i^{nk+j}} & \partition{z_1^{(y-1)j},z_i^j,(nyz)_i^k} & for $1 < i \leq L$, $0 \leq j < n$\\[0.5ex]
%\partition{(yz)_i} & \partition{z_1^{y-1},z_i} & for $L < i \leq K$
\hline
\end{tabular}
\end{table}

Now suppose that $L=0$; then an injection mapping a partition $\pi_2$ into a partition $\pi_1$ is given by
\begin{equation*}
\num(z_i,\pi_1) = n \cdot \num((nz)_i,\pi_2), \quad 1 \leq i \leq K.
\end{equation*}
If we suppose instead that $L>0$, then an injection mapping a partition $\pi_2$  into a partition $\pi_1$ is given by
\begin{equation*}
\num((nyz)_i,\pi_1) = Q((yz)_i,\pi_2) \quad \text{(if $1\leq i \leq L$)}
\end{equation*}
and
\begin{equation*}
\num(z_i,\pi_1) = 
  \begin{cases}
    n\cdot\num((nz)_i,\pi_2) & \text{if } L < i \leq K,\\
    n\cdot\num((nz)_i,\pi_2) + R((yz)_i,\pi_2) & \text{if } 1 < i \leq L,\\
    n\cdot\num((nz)_1,\pi_2) + y\cdot R((yz)_1,\pi_2) + (y-1)\cdot A &\text{if }i=1,
  \end{cases}
\end{equation*}
where
\begin{equation*}
A :=  \sum_{1 < j \leq L} R((yz)_j,\pi_2).
\end{equation*}

The inverse map when $L=0$ is obvious:
\begin{equation*}
\num((nz)_i,\pi_2) = Q(z_i,\pi_1), \quad 1 \leq i \leq K.
\end{equation*}
For $L>0$, 
if we let $C^\ast(\pi_1)$ be the least nonnegative residue of $\overline{y}\cdot\mu^\ast(\pi_1)$ modulo $n$, let $\overline{y}$ be the multiplicative inverse of $y$ modulo $n$, and define
\begin{equation*}
  \mu^\ast(\pi_1) := \num(z_1,\pi_1) -(y-1)\cdot \sum_{1 < j \leq L} R(z_j,\pi_1),
\end{equation*}
then we have the following for the inverse map:
\begin{equation*}
\num((yz)_i,\pi_2) = 
  \begin{cases}
    n\cdot\num((nyz)_i,\pi_1) + R(z_i,\pi_1) & \text{if } 1 < i \leq L,\\
    n\cdot \num((nyz)_1,\pi_1) + C^\ast(\pi_1) &\text{if }i=1,
  \end{cases}
\end{equation*}
and
\begin{equation*}
\num((nz)_i,\pi_2) =
  \begin{cases}
    Q(z_i,\pi_1) & \text{if } 1 < i \leq K,\\
    \dfrac{\mu^\ast(\pi_1)-y\cdot C^\ast(\pi_1)}{n} & \text{if } i=1.
  \end{cases}
\end{equation*}
\end{proof}

Now, the generalization given by Theorem \ref{GenThm} follows from Theorems \ref{MainThm} and \ref{DualThm}, the fact that $\GEQ$ is transitive, and the simple fact that
\begin{equation*}
\frac{1}{\prod_{r=1}^{s}(1-q^{a_r})} \GEQ \frac{1}{\prod_{r=1}^{s-1}(1-q^{a_r})}
\end{equation*}
regardless of the values of the positive integers $a_1$, \dots, $a_s$, and $s$.

\section{Two Invariants of the Injections}\label{sec:invariant}

It can be useful to consider invariants of a mapping in order to learn more about the mapping. The injections we constructed were designed to, among other things, map a partition to another partition with the same norm. Thus, we consider the norm of a partition to be (the first) invariant under the injections. 

Continuing to employ the subscript notation in section \ref{sec:notation}, we define the following ``partition flattening'' function (not an injection) by its action on parts of a partition.
\begin{definition}
Let $F(\pi)$, where $\pi$ is any partition whose parts are given using the subscript notation in section \ref{sec:notation}, be given by the part-wise action of reducing all subscripts to 1.
\end{definition}
\noindent  In other words, $F$ removes the multiples of $m$ (or $nm$) from the parts. So, for example,
$F\left(\partition{2_1^{},3_2^2,2_3^3}\right) = \partition{2_1^4,3_1^2} = \partition{2^4,3^2}.$

An examination of the atlases (given in Tables \ref{table:atlas} and \ref{table:dual-atlas}) for each of the injections presented yields the following fact:
\[\pi \stackrel{\varphi}{\mapsto} \varphi(\pi) \implies |F(\pi)| = |F(\varphi(\pi))|,\]
where $\varphi$ is either of the injections presented in this manuscript.
So the injections presented not only preserve the norms of the partitions, but they also preserve the norms of the corresponding ``flattened'' partitions. 
Thus, we must have 
\begin{equation}
\tilde P_1(f,x,K,L,n,y,z) \geq \tilde P_2(f,x,K,L,n,y,z),\label{tildes-relationship}
\end{equation}
where $\tilde P_{1}(f,x,K,L,n,y,z)$ and $\tilde P_{2}(f,x,K,L,n,y,z)$ count the same types of partitions as $P_1$ and $P_2$ do in \eqref{firstproduct} and \eqref{secondproduct}, respectively, but with the additional restriction imposed that the partitions must have $|F|$-value equal to $zf$.

Using the $q$-binomial Theorem (Theorem 3.3 in \cite{An2}), we can easily derive 
\begin{align}
\sum_{x=0}^\infty \tilde P_{1}(f,x,K,L,n,y,z) q^{x} = q^{zf}\sum_{\substack{s,t\geq 0,\\ s+nyt=f}} \genqbinom{K-1+s}{s}{q^m} \genqbinom{L-1+t}{t}{q^{mn}} \label{newfirstproduct}\\
\intertext{and}
\sum_{x=0}^\infty \tilde P_{2}(f,x,K,L,n,y,z) q^{x} = q^{zf}\sum_{\substack{s,t\geq 0,\\ sy+nt=f}} \genqbinom{K-1+s}{s}{q^m} \genqbinom{L-1+t}{t}{q^{mn}}. \label{newsecondproduct}
\end{align}
%Therefore,
%\begin{equation*}
%q^{zf}\sum_{\substack{s,t\geq 0,\\ s+nyt=f}} \genqbinom{K-1+s}{s}{q^m} \genqbinom{L-1+t}{t}{q^{mn}}\GEQ
%q^{zf}\sum_{\substack{s,t\geq 0,\\ sy+nt=f}} \genqbinom{K-1+s}{s}{q^m} \genqbinom{L-1+t}{t}{q^{mn}}.
%\end{equation*}
%Replacing $q^m$ by $q$ and simplifying, we get the following refinement of Theorem \ref{MainThm}
From \eqref{tildes-relationship} we see that \eqref{newfirstproduct} $\GEQ$ \eqref{newsecondproduct}, and so upon dividing by $q^{zf}$ and replacing $q^m$ by $q$ we obtain the following refinement of Theorem \ref{MainThm}.
\begin{theorem} \label{Refinement1}
For any positive integers $n$ and $y$, with $\gcd(n,y)=1$, and integers $K$, $L$, and $f$, with $K \geq L \geq 0$ and $f\geq 0$,
\begin{equation*}
\sum_{\substack{s,t\geq 0\\ s+nyt=f}} \genqbinom{K-1+s}{s}{q} \genqbinom{L-1+t}{t}{q^{n}}\GEQ
\sum_{\substack{s,t\geq 0\\ sy+nt=f}} \genqbinom{K-1+s}{s}{q} \genqbinom{L-1+t}{t}{q^{n}}.
\end{equation*}
\end{theorem}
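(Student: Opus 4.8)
The plan is to obtain Theorem \ref{Refinement1} as a refinement of Theorem \ref{MainThm} recorded by the ``flattening'' invariant $|F|$, in three moves: first verify that the injection $\varphi$ of Section \ref{sec:injection} preserves $|F|$, which upgrades the injective proof of Theorem \ref{MainThm} to the finer statement \eqref{tildes-relationship}; then convert \eqref{tildes-relationship} into an inequality between the generating functions \eqref{newfirstproduct} and \eqref{newsecondproduct}; and finally normalize that inequality into the form claimed.

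For the first move I would check, one row at a time, that each of the four elementary substitutions listed in the atlas (Table \ref{table:atlas}) sends a block of parts to a block of parts with the same $|F|$-value. For instance $\partition{(yz)_i^{nk+j}}$ and $\partition{z_1^{(y-1)j},z_i^j,(nyz)_i^k}$ both have $|F|$-value $yz(nk+j)$, since $F$ collapses every subscript to $1$; the three remaining rows behave the same way. Because $\varphi$ is assembled by applying these substitutions block by block (the only coupling between blocks being that their $z_1$-contributions are pooled, which leaves the total unchanged), it follows that $|F(\pi_2)| = |F(\varphi(\pi_2))|$ for every $\pi_2$ in the domain. Hence, for each fixed $f \geq 0$, $\varphi$ restricts to an injection from the partitions counted by $\tilde P_2(f,x,K,L,n,y,z)$ into those counted by $\tilde P_1(f,x,K,L,n,y,z)$, which is exactly \eqref{tildes-relationship}.

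For the second move I would apply the $q$-binomial theorem in the form $1/(x;q)_N = \sum_{j \geq 0} \genqbinom{N-1+j}{j}{q} x^j$ to each factor of the left-hand product of \eqref{MainIneq} --- once with $(x,q,N) = (q^z,q^m,K)$ and once with $(q^{nyz},q^{nm},L)$ --- which expands it as $\sum_{s,t\geq 0}\genqbinom{K-1+s}{s}{q^m}\genqbinom{L-1+t}{t}{q^{nm}}\,q^{z(s+nyt)}$; isolating the terms of $|F|$-value $zf$, i.e.\ those with $s+nyt = f$, gives \eqref{newfirstproduct}, and the identical computation on the right-hand product of \eqref{MainIneq} gives \eqref{newsecondproduct}. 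Combined with \eqref{tildes-relationship} this yields \eqref{newfirstproduct} $\GEQ$ \eqref{newsecondproduct}.

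The third move is cosmetic: I would divide both sides of \eqref{newfirstproduct} $\GEQ$ \eqref{newsecondproduct} by $q^{zf}$, observe that every surviving exponent of $q$ is a multiple of $m$ (the remaining series involve only $q^m$ and $q^{nm}$), and substitute $q^m \mapsto q$, hence $q^{nm}\mapsto q^n$; this substitution is legitimate and preserves $\GEQ$, and it produces precisely the stated inequality --- with $m$ gone because it was never a parameter of $\varphi$, and $z$ gone thanks to the division by $q^{zf}$. The only step requiring genuine attention is the block-wise $|F|$-invariance check of $\varphi$ in the first move; everything afterward is routine, and the last move is just the ``cancel the common factor of the exponents'' reduction already discussed after \eqref{GenericQproductIneq}.
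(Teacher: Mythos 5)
Your proposal is correct and follows the paper's own route exactly: verify from the atlas that the injection preserves the flattened norm $|F|$, deduce \eqref{tildes-relationship}, expand both products via the $q$-binomial theorem to get \eqref{newfirstproduct} and \eqref{newsecondproduct}, and then divide by $q^{zf}$ and replace $q^m$ by $q$. No substantive difference from the argument in section \ref{sec:invariant}.
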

\noindent
Analogously, we obtain the following refinement of Theorem \ref{DualThm}.
\begin{theorem} \label{Refinement2}
For any positive integers $n$ and $y$, with $\gcd(n,y)=1$, and integers $K$, $L$, and $f$, with $K \geq L \geq 0$ and $f\geq 0$,
\begin{equation*}
\sum_{\substack{s,t\geq 0,\\ s+nyt=f}} \genqbinom{K-1+s}{s}{q} \genqbinom{L-1+t}{t}{q^{n}}\GEQ
\sum_{\substack{s,t\geq 0,\\ sy+nt=f}} \genqbinom{L-1+s}{s}{q} \genqbinom{K-1+t}{t}{q^{n}}.
\end{equation*}
\end{theorem}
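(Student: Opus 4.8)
The plan is to deduce Theorem \ref{Refinement2} from Theorem \ref{DualThm} by exactly the argument that produced Theorem \ref{Refinement1} from Theorem \ref{MainThm}. The two ingredients are: (i) the injection built in the proof of Theorem \ref{DualThm} preserves the flattened norm $|F(\cdot)|$ as well as the ordinary norm; and (ii) once that is known, the $q$-binomial theorem converts the resulting refined inequality between restricted partition counts into the stated inequality between sums of products of Gaussian binomials.

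First I would check the flattening invariance directly from the atlas. The atlas for the dual injection, Table \ref{table:dual-atlas}, is the atlas for the original injection, Table \ref{table:atlas}, with the single row $\partition{(yz)_i} \mapsto \partition{z_1^{y-1},z_i}$ (for $L < i \leq K$) deleted; every row that remains already occurs in Table \ref{table:atlas}, and a glance at each such row shows that the $|F|$-value of the left block equals that of the right block (e.g.\ $\partition{(yz)_i^{nk+j}}$ has flattened weight $yz(nk+j)$, and so does $\partition{z_1^{(y-1)j},z_i^j,(nyz)_i^k}$). Hence, writing $\varphi^\ast$ for the dual injection,
\[\pi \stackrel{\varphi^\ast}{\mapsto} \varphi^\ast(\pi) \implies |F(\pi)| = |F(\varphi^\ast(\pi))|.\]

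Next I would introduce restricted counts $\tilde P_1^\ast$ and $\tilde P_2^\ast$: these enumerate the same partitions as the two products on the two sides of \eqref{DualIneq}, but with the extra constraint that the $|F|$-value be $zf$. By the invariance above, $\varphi^\ast$ restricts to an injection on these smaller sets, so $\tilde P_1^\ast(f,x,K,L,n,y,z)\geq \tilde P_2^\ast(f,x,K,L,n,y,z)$ for all $x$. Since the left-hand product of \eqref{DualIneq} coincides with that of \eqref{MainIneq}, $\tilde P_1^\ast$ is nothing but the $\tilde P_1$ of \eqref{newfirstproduct}. For $\tilde P_2^\ast$, the right-hand product of \eqref{DualIneq} is assembled from the parts $(yz)_1,\dots,(yz)_L$ together with $(nz)_1,\dots,(nz)_K$ — the roles of $K$ and $L$ are interchanged relative to \eqref{secondproduct} — so the $q$-binomial theorem (Theorem 3.3 in \cite{An2}) yields
\begin{equation*}
\sum_{x=0}^\infty \tilde P_{2}^\ast(f,x,K,L,n,y,z) q^{x} = q^{zf}\sum_{\substack{s,t\geq 0,\\ sy+nt=f}} \genqbinom{L-1+s}{s}{q^m} \genqbinom{K-1+t}{t}{q^{mn}}.
\end{equation*}

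Finally, $\tilde P_1^\ast\geq\tilde P_2^\ast$ gives \eqref{newfirstproduct} $\GEQ$ the right-hand side just displayed; dividing through by $q^{zf}$ and replacing $q^m$ by $q$ produces precisely Theorem \ref{Refinement2}. I do not expect any serious obstacle: the argument is a transcription of the Theorem \ref{Refinement1} derivation, and the only points demanding care are the $K\leftrightarrow L$ swap in the Gaussian binomials and the (short, mechanical) verification that each row of the dual atlas respects $|F|$.
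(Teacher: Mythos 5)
Your argument is exactly the one the paper intends: the paper derives Theorem \ref{Refinement2} by saying ``analogously,'' meaning precisely your route of checking that the dual injection's atlas preserves $|F|$ and then applying the $q$-binomial theorem with $K$ and $L$ interchanged in the second product. The only (harmless) imprecision is that the first row of Table \ref{table:dual-atlas} has its index range extended from $1\leq i\leq L$ to $1\leq i\leq K$ rather than being literally a row of Table \ref{table:atlas}, but the $|F|$-verification for that rule is unchanged.
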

\noindent
We also note that Theorem \ref{BGThm} admits a similar type of refinement, as follows.
\begin{theorem} \label{Refinement3}
Suppose $L>0$, $f\geq 0$, and $1<r<m-r$. Then, 
\begin{equation*}
\sum_{\substack{s,t\geq 0,\\ s+(m-1)t=f}} \genqbinom{L-1+s}{s}{q} \genqbinom{L-1+t}{t}{q}\GEQ
\sum_{\substack{s,t\geq 0,\\ sr+(m-r)t=f}} \genqbinom{L-1+s}{s}{q} \genqbinom{L-1+t}{t}{q},
\end{equation*}
provided that $r \doesnotdivide (m-r)$.
\end{theorem}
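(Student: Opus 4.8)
The plan is to run, for the Berkovich--Garvan injection of \cite{BG} that underlies Theorem~\ref{BGThm}, the same argument by which Theorems~\ref{Refinement1} and~\ref{Refinement2} were extracted from the injections of section~\ref{sec:injection} and of the proof of Theorem~\ref{DualThm}. Keep the ``flattening'' operator $F$ of section~\ref{sec:invariant}, which replaces each part by its residue representative modulo $m$. Under the hypotheses $L>0$, $1<r<m-r$, and $r\doesnotdivide(m-r)$ one automatically also has $1<r<m-1$ and $(m-r)\doesnotdivide r$ (the latter since $0<r<m-r$), so Theorem~\ref{BGThm} applies and supplies a norm-preserving injection $\varphi$ from the partitions into parts from $\{r,r+m,\dots,r+(L-1)m\}\cup\{m-r,2m-r,\dots,Lm-r\}$ (counted by the smaller product) to the partitions into parts from $\{1,1+m,\dots,1+(L-1)m\}\cup\{m-1,2m-1,\dots,Lm-1\}$ (counted by the larger product). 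A domain partition with $s$ parts $\equiv r$ and $t$ parts $\equiv m-r\pmod{m}$ (with multiplicity) has $|F|$-value $rs+(m-r)t$; a codomain partition with $s'$ parts $\equiv 1$ and $t'$ parts $\equiv m-1$ has $|F|$-value $s'+(m-1)t'$.

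Assume for the moment that $\varphi$ preserves $|F|$. Fix $f\geq 0$ and restrict to partitions with $|F|$-value $f$. The ``height'' datum---which of the $L$ class representatives each part actually is---contributes to the generating function a factor $\genqbinom{L-1+s}{s}{q^m}$ for the parts in one class and $\genqbinom{L-1+t}{t}{q^m}$ for the parts in the other, exactly as in the derivation of \eqref{newfirstproduct}--\eqref{newsecondproduct} (both moduli are now $m$, so both Gaussian binomials have base $q^m$), while the residue representatives themselves contribute the global monomial $q^{f}$. Hence the codomain partitions of $|F|$-value $f$ have generating function $q^{f}\sum_{s+(m-1)t=f}\genqbinom{L-1+s}{s}{q^m}\genqbinom{L-1+t}{t}{q^m}$ and the domain ones have $q^{f}\sum_{rs+(m-r)t=f}\genqbinom{L-1+s}{s}{q^m}\genqbinom{L-1+t}{t}{q^m}$. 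Since $\varphi$ respects the norm and (by assumption) $|F|$, it restricts to an injection between the $|F|=f$ classes for every $f$, so the codomain series dominates the domain series coefficientwise; cancelling $q^{f}$ and replacing $q^m$ by $q$ yields precisely the inequality of Theorem~\ref{Refinement3}.

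The substantive step is the assumption just used: that $\varphi$ preserves $|F|$, i.e.\ that it carries a partition flattening to $\partition{r^s,(m-r)^t}$ to one flattening to some $\partition{1^{s'},(m-1)^{t'}}$ with $s'+(m-1)t'=rs+(m-r)t$. This is harder to check than the corresponding fact for the injections of this paper, because the Berkovich--Garvan map is not packaged as a short atlas of local moves but is assembled iteratively (in the anti-telescoping spirit): one has to follow its construction and verify that the $|F|$-value is left undisturbed at every stage. I expect the invariance to hold---the elementary moves in such constructions either permute heights within a single residue class or exchange parts across classes along fixed linear relations that already balance the flattened norm---but carrying out this verification is the real content of the proof. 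Should tracing the Berkovich--Garvan construction prove too cumbersome, the fallback is to build a fresh injection acting separately on each $|F|$-graded slice, in the style of the atlases in Tables~\ref{table:atlas} and~\ref{table:dual-atlas}, which by design never mix the graded pieces; summing the resulting refined inequalities over $f$ then also re-proves Theorem~\ref{BGThm} (in the direction $1<r<m-r$) via the $q$-binomial theorem.
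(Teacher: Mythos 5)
The paper itself offers no proof of Theorem \ref{Refinement3} --- it explicitly defers ``the details of this last refinement'' to a future publication --- so there is no in-paper argument to match yours against. Taken on its own terms, your write-up is a sound \emph{plan} rather than a proof. The reduction you describe is correct in structure: if some norm-preserving injection establishing Theorem \ref{BGThm} also preserves the flattened norm $|F|$, then restricting to the $|F|=f$ slice and computing the slice generating functions (each residue class contributing a Gaussian binomial in base $q^m$ times the monomial carrying the residue representatives) does yield exactly the two sums in Theorem \ref{Refinement3} after cancelling $q^f$ and replacing $q^m$ by $q$. Your hypothesis bookkeeping is also right: under $1<r<m-r$ the condition $(m-r)\doesnotdivide r$ is automatic, so Theorem \ref{BGThm} applies.

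But the decisive step is missing, and you say so yourself: you never verify that the Berkovich--Garvan injection preserves $|F|$ (equivalently, that it preserves the total subscript excess, i.e.\ the sum of $(i-1)$ over all parts, which equals $(|\pi|-|F(\pi)|)/m$). That verification is where essentially all of the content of the theorem lives. Unlike the injections of this paper, which are specified by short atlases of local moves that visibly fix $|F|$, the map of \cite{BG} is assembled by an iterated procedure, and there is no a priori reason its elementary steps balance the flattened norm; your stated expectation that ``the invariance should hold'' is not an argument, and if it failed the whole approach would collapse. The fallback you mention --- building a fresh injection that acts within each $|F|$-graded slice --- is likewise only named, not constructed. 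As it stands, you have a correct strategy with its central lemma unproved, which does not yet constitute a proof of Theorem \ref{Refinement3}.
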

\noindent
The details of this last refinement, however, will be given elsewhere.

\section{Conclusion}\label{sec:conclusion}

In their original \emph{Lecture hall partitions} paper \cite{BME}, Mireille Bousquet-M\'{e}lou and Kimmo Eriksson proved that the two-variable generating functions for the number of lecture hall partitions with $n$ parts, 
\[\sum_b  X^{b_n + b_{n-2} + \cdots} Y^{b_{n-1} + b_{n-3} + \cdots},\]
where the sum is over all partitions $b=\partition{b_1,\dots,b_n}$ such that 
\[\frac{b_n}{n} \geq \frac{b_{n-1}}{n-1} \geq \cdots \geq b_1 \geq 0,\]
is
\begin{equation}\label{LHPform}
\frac{1}{(X;XY)_n}.  
\end{equation}
We notice that if we take 
$X=q^z$, $Y=q^{m-z}$, and $n=K$ in \eqref{LHPform}, we get the first product in \eqref{MainIneq} with $L=0$; similarly, 
if we take 
$X=q^{yz}$, $Y=q^{m-yz}$, and $n=K$ in \eqref{LHPform}, then we get the second product in \eqref{MainIneq} with $L=0$.

Now suppose instead that we take $K=L$, $n=2$, $m=2(y-1)z$, and $y$ odd in \eqref{MainIneq}; then Theorem \ref{MainThm} implies
\begin{equation}\label{SavageForm}
  \frac{1}{(q^z;q^m)_L(q^{m+2z};q^{2m})_L} \GEQ \frac{1}{(q^{yz};q^m)_L(q^{2z};q^{2m})_L}.
\end{equation}
Building on the work in the subsequent paper \cite{BME2} (by Bousquet-M\'{e}lou and Eriksson),  Sylvie Corteel, Carla Savage, and Andrew Sills established in \cite{CSS} that
\begin{equation}\label{SavageODD}
\sum_b  X^{b_{2L-1} + b_{2L-3} + \cdots + b_1} Y^{b_{2L} + b_{2L-2} + \cdots + b_2} = \frac{1}{(Y;X^2Y^2)_L (X^2Y^4;X^4Y^4)_L},
\end{equation}
where the sum is over all partitions $b=\partition{b_1,\dots,b_{2L}}$ such that 
\[\frac{b_{2L}}{2L} \geq \frac{b_{2L-1}}{2L-1} \geq \cdots \geq b_1 \geq 0\]
and $b_n$ is even whenever $n$ is \emph{odd}; and similarly that
\begin{equation}\label{SavageEVEN}
\sum_b  X^{b_{2L-1} + b_{2L-3} + \cdots + b_1} Y^{b_{2L} + b_{2L-2} + \cdots + b_2} = \frac{1}{(XY^2;X^2Y^2)_L (Y^2;X^4Y^4)_L},
\end{equation}
where now the sum is over all partitions $b=\partition{b_1,\dots,b_{2L}}$ such that 
\[\frac{b_{2L}}{2L} \geq \frac{b_{2L-1}}{2L-1} \geq \cdots \geq b_1 \geq 0\]
and $b_n$ is even whenever $n$ is \emph{even}.
We notice that if we take $X=q^{(y-2)z}$ and $Y=q^z$, then the right-hand sides of \eqref{SavageODD} and \eqref{SavageEVEN} become, respectively, the left-hand and right-hand sides of \eqref{SavageForm}.

If instead we use Theorem \ref{DualThm}, then taking $K=L+1$, $n=2$, $m=2(y-1)z$, and $y$ odd in  \eqref{DualIneq} yields
\begin{equation}\label{DualSavageForm}
  \frac{1}{(q^z;q^m)_{L+1}(q^{m+2z};q^{2m})_L} \GEQ \frac{1}{(q^{yz};q^m)_L(q^{2z};q^{2m})_{L+1}},
\end{equation}
where the left-hand side of \eqref{DualSavageForm} corresponds with
\begin{equation}\label{DualSavageEVEN}
\sum_b  X^{b_{2L} + b_{2L-2} + \cdots + b_2} Y^{b_{2L+1} + b_{2L-1} + \cdots + b_1} = \frac{1}{(Y;X^2Y^2)_{L+1} (X^2Y^4;X^4Y^4)_L},
\end{equation}
the right-hand side of \eqref{DualSavageForm} corresponds with 
\begin{equation}\label{DualSavageODD}
\sum_b  X^{b_{2L} + b_{2L-2} + \cdots + b_2} Y^{b_{2L+1} + b_{2L-1} + \cdots + b_1} = \frac{1}{(XY^2;X^2Y^2)_L (Y^2;X^4Y^4)_{L+1}},
\end{equation}
the sums are over partitions of the form $b=\partition{b_1,\dots,b_{2L+1}}$ with 
\[\frac{b_{2L+1}}{2L+1} \geq \frac{b_{2L}}{2L} \geq \cdots \geq b_1 \geq 0,\]
where for \eqref{DualSavageEVEN} $b_n$ is even whenever $n$ is \emph{even}, and for \eqref{DualSavageODD} $b_n$ is even whenever $n$ is \emph{odd}. In both \eqref{DualSavageEVEN} and \eqref{DualSavageODD} (which were established in \cite{CSS}), we again take $X=q^{(y-2)z}$ and $Y=q^z$ to see the correspondences.

These connections to lecture hall partitions certainly beg the question of wheth\-er or not the more general products in \eqref{MainIneq} have nontrivial partition theoretic interpretations. While we do not have a completely satisfying answer yet, we look forward to learning if they do.

Also, evidence seems to indicate that when $\gcd(n,y)>1$ there may be finitely many exceptions or even infinitely many exceptions (powers of $q$ where the inequality breaks down), depending on the relationship between $n$ and $y$, and to a lesser extent, between $K$ and $L$. When $K<L$ (and $\gcd(n,y)=1$), however, Theorem \ref{MainThm} seems to pathologically fail in the sense that $P_1(x,K,L,m,n,y,z)$ is less than $P_2(x,K,L,m,n,y,z)$ for all $x$ greater than some natural number $X(K,L,m,n,y,z)$. We look forward to discovering the truths hidden behind these observations and sharing them in the future.

Finally, the authors have written some small Maple programs to generate tables like those in section \ref{sec:examples} and would be happy to share them with anyone interested.

\vspace{1em}

\noindent
{\it Acknowledgements}.
We are grateful to the organizers of the Ramanujan 125 conference for their tireless efforts which helped foster an environment eventually leading to the discoveries contained in this manuscript. We thank Carla Savage for bringing \cite{CSS} to our attention as well as for patiently explaining her work on lecture hall partitions to us. We also wish to thank one of the referees of the original manuscript, whose insightful comments led us to the discovery of the refinements discussed in section \ref{sec:invariant}.

\providecommand{\bysame}{\leavevmode\hbox to3em{\hrulefill}\thinspace}

\end{document}